\documentclass{article}[12pt]
\hoffset=-15pt \voffset=-40pt \topmargin=17pt \textwidth=17cm
\textheight=22cm \evensidemargin=0.5cm \oddsidemargin=0.5cm
\parindent=0.5cm \parskip=0cm \footskip=50pt

\usepackage{amsmath}
\usepackage{amssymb}
\usepackage{extarrows}

\setcounter{MaxMatrixCols}{10}
\newtheorem{theorem}{Theorem}

\newtheorem{proposition}[theorem]{Proposition}
\newtheorem{remark}[theorem]{Remark}

\newenvironment{proof}[1][Proof]{\noindent\textbf{#1.} }{\ \rule{0.5em}{0.5em}}
\usepackage{amssymb}
\usepackage{graphicx}
\usepackage[dvipsnames,usenames]{color}

\input{epsf.tex}
\usepackage[affil-it]{authblk}
\author{Ioannis Dimitriou \footnote{idimit@uoi.gr}\footnote{Corresponding author.}}
\affil{\small Department of Mathematics, 
	University of Ioannina, 
	45110, Ioannina, Greece.}
\begin{document}
\title{On dual risk models with proportional gains and dependencies}

\maketitle
\begin{abstract}
In this work, we consider extensions of the dual risk model with proportional gains by introducing a dependence structure between gain sizes and gain interrarrival times. Among others, we further consider the case where the proportional parameter is randomly chosen, the case where it is a uniformly random variable, as well as the case where we may have upwards as well as downwards jumps. Moreover, we consider the case with causal dependence structure, as well as the case where the dependence is based on the generalized Farlie-Gumbel-Morgenstern copula. The ruin probability and the distribution of the time to ruin are investigated.
   \end{abstract}
    \vspace{2mm}
	
	\noindent
	\textbf{Keywords}: {Dual risk model; ruin probability; time to ruin; dependence; copula; recursion}
\section{Introduction}
In this work, we investigate the dual risk model with constant expense rate normalized to unity, by considering several non-trivial generalizations of the model considered in \cite{boxruin}. Among others, our primary aim is to lift several independence assumptions among gain interarrival times and gain sizes, but still, to be able to obtain explicit results regarding some major metrics of interest such as ruin probability and the time to ruin.

For a detailed study on the fundamentals of ruin probabilities in the conventional, permanently inspected, Cramer-Lundberg context see \cite{asmalb,manbox}. It is well-known that there is a duality property among the ruin theory and the queueing theory. In particular, the Cramer-Lundberg model is dual to the M/G/1 queueing model with the same arrival rate and with a service time distribution that equals the claim size distribution in the Cramer-Lundberg model; see \cite{asmalb,manbox}. Quite recently, in the seminal book \cite{manbox}, the authors presented the main results and the most important probabilistic methods related to the Cramer-Lundberg model, and exploited connections with the related model in queueing theory. 

Its dual process has also attracted interest in the insurance risk literature. As pointed out in \cite{ava1}, the dual risk model describes the surplus or equity of a company with fixed expense rate and occasional income inflows of random size, called innovations or gains. These gains arise due to some
contingent events (e.g. discoveries, sales). Examples where the dual risk model applies are pharmaceutical, petroleum, or R\&D
companies. Other examples are commission-based businesses, such as real estate agents or brokerage firms that sell mutual funds or insurance products with a front-end load. As stated in \cite{boxruin}, an
illustrative realistic example of the dual risk model with proportional gains refers to start-ups or e-companies, where their gains strongly depend on the amount of investments, which is often proportional to the value of the company; e.g., the CD Projekt, one of the biggest Polish companies
producing computer games. For more information on research progress on the dual risk model and its applications, see e.g., \cite{ava1,ava2,ava3,boxmafro,fahim,afon,bayr,bayr1,hu,albtax,rodri,palmo}.

The major contribution of this work relies on the investigation of the dual risk model with proportional gain mechanism where in addition, several independence assumptions are lifted (e.g., the distribution of the gain size depends on the gain interarrival time, and this type of dependence it also affects the proportional parameter, as well as the case where the dependence structure is based on a copula), and still, we are able to derive explicit expressions for the ruin probability and the time to ruin. Our interest focuses on the derivation of the ruin probability, given that the initial surplus equals $x$, i.e., $R(x):=P(\tau_{x}<\infty|U(0)=x)$, where $\tau_{x}=\inf\{t\geq 0:U(t)=0\}$, as well as the distribution of $\tau_{x}$, i.e., the time to ruin. To investigate $R(x)$, we use a one-step analysis where the process under study is viewed at successive gain times. Given the initial capital, we obtain the Laplace transform of the ruin probability for the risk process, and the double Laplace transform of the ruin time. The approach we follow bears similarities to the method developed in \cite{box1,box2,box3,dimi,hoo} to study reflected autoregressive processes. To our best knowledge, this work provides for the first time exact expressions for ruin measures in a general dual risk model with proportional gains under dependent setting, and thus, should be viewed as a starting point for obtaining analytical results in more general dependent scenarios.

The paper is organised as follows. In Section \ref{caus} we consider the dual risk model with causal dependence structure, that relates the gain size, the gain interarrival and the proportional parameter, and obtain the Laplace transform of the ruin probability as well as the Laplace transform of the ruin time Laplace-Stieltjes transform (LST). We present result where the gain size follows exponential, Erlang or even mixed Erlang distribution. In Section \ref{copu}, we focus on the dual risk model with proportional gains and where the gain size and the gain interarrival times are dependent based on the (generalized) Farlie-Gumbel-Morgenstern (FGM) copula. We also consider the case where, in addition, there is a linear dependence among
gain interarrival times and surplus level. Section \ref{random} is devoted to the analysis of the dual risk model with randomly proportional gains with upward and downward jumps. We further consider the case of dependence based on the FGM copula among gain sizes and gain interarrivals. Finally, in Section \ref{rv} we consider the case where the proportional parameter is uniformly distributed. 

\section{A dual risk model with causal dependence structure and proportional gains}\label{caus}
In this section, we generalize the work in \cite{boxruin}, by assuming a dependence structure among the gain size and the gain interarrival times, which affects both the gain size distribution as well as the proportional parameter. In \cite{albbox}, the authors considered the classical ruin model with causal dependencies (motivated by the work in \cite{boxperry}), where the distribution of the interclaim time depends on the actual size of the previous claim based on a (random) threshold type policy. In the following, we consider the dual model with the additional feature of the proportional gains. 

If the gain interarrival time $B_{i}$ is larger than a threshold $T_{i}$, the, the capital jumps up to the level $(1+a_{0})u+C_{i}^{(0)}$, where $C_{i}^{(0)}$ follows a hyperexponential distribution with cumulative distribution function (c.d.f.) $C_{0}(x)=\sum_{i=1}^{K}q_{i}(1-e^{-\mu_{i}x})$. Otherwise, the capital jumps up to the level $(1+a_{1})u+C_{i}^{(1)}$, where $C_{i}^{(1)}$ follows a hyperexponential distribution with c.d.f. $C_{1}(x)=\sum_{i=1}^{L}h_{i}(1-e^{-\nu_{i}x})$. The thresholds $T_{m}$ are assumed to be independent and identically distributed (i.i.d.) random variables with c.d.f. $T(.)$. We assume that $B_{i}$ are i.i.d. random variables having c.d.f. $B(.)$, density $b(.)$, and Laplace-Stieltjes transform $\phi(.)$.

In this work, we generalize the model in \cite{boxruin} by incorporating a causal dependence structure, namely that the distribution of the gain size depends on the gain interarrival time. This type of dependence it also affects the proportional parameter, which in turn affects the size of the capital jump. Furthermore, contrary to the case in \cite{boxruin} where the authors assumed exponentially distributed gain sizes (they just mentioned in \cite[Remark 2.4]{boxruin} that the model can be generalized to the case of hyperexponentially distributed gain sizes), we consider in Remark \ref{rem3} the case where the gain sizes follow a mixed Erlang distribution, a class of the phase-type distributions that can be used to approximate any given continuous distribution in $[0,\infty)$. 

\subsection{The ruin probability}
We focus on deriving the Laplace transform of the ruin probability $R(x)$ when starting in $x$, by distinguishing the two cases in which no jump up occurs before $x$ (hence ruin occurs at time $x$) and in which a jump up occurs at some time $t\in(0,x)$. Then,
\begin{equation}
    \begin{array}{rl}
         R(x)=&1-B(x)+\int_{t=0}^{x}\int_{y=0}^{\infty}\left(P(T_{m}<t)R((1+a_{1})(x-t)+y)\sum_{i=1}^{L}h_{i}\nu_{i}e^{-\nu_{i}y}\right.  \vspace{2mm}\\
         &\left.+ P(T_{m}\geq t)R((1+a_{0})(x-t)+y)\sum_{i=1}^{K}q_{i}\mu_{i}e^{-\mu_{i}y}\right)dydB(t).
    \end{array}\label{eq1}
\end{equation}
Denote the Laplace transform
\begin{displaymath}
    \rho(s):=\int_{x=0}^{\infty}e^{-sx}R(x)dx.
\end{displaymath}
Then, \eqref{eq1} becomes
\begin{equation}
    \rho(s)=\frac{1-\phi(s)}{s}+I_{0}(s)+I_{1}(s),
    \label{eq2}
\end{equation}
where
\begin{displaymath}
    \begin{array}{rl}
         I_{0}(s):=&\int_{x=0}^{\infty}e^{-sx}\int_{t=0}^{x}\int_{z=(1+a_{0})(x-t)}^{\infty}P(T_{m}\geq t) R(z)\sum_{i=1}^{K}q_{i}\mu_{i}e^{-\mu_{i}(z-(1+a_{0})(x-t))}dzdB(t)dx\vspace{2mm} \vspace{2mm}\\
         =& \int_{t=0}^{\infty}e^{-st}P(T_{m}\geq t) \sum_{i=1}^{K}q_{i}\int_{x=t}^{\infty}e^{-(x-t)[s-\mu_{i}(1+a_{0})]}\int_{z=(1+a_{0})(x-t)}^{\infty}R(z)\mu_{i}e^{-\mu_{i}z}dzdB(t)dx \vspace{2mm}\\
         =&\chi_{0}(s)\sum_{i=1}^{K}q_{i}\mu_{i}\int_{w=0}^{\infty}e^{-w[s-\mu_{i}(1+a_{0})]}\int_{z=(1+a_{0})w}^{\infty}R(z)e^{-\mu_{i}z}dzdw \vspace{2mm}\\
         =&\chi_{0}(s)\sum_{i=1}^{K}q_{i}\mu_{i}\int_{z=0}^{\infty}R(z)e^{-\mu_{i}z}\left(\frac{e^{-\frac{z}{1+a_{0}}(s-\mu_{i}(1+a_{0}))}-1}{\mu_{i}(1+a_{0})-s}\right)dz\vspace{2mm}\\
         =&\chi_{0}(s)\frac{\sum_{i=1}^{K}q_{i}\mu_{i}}{\mu_{i}(1+a_{0})-s}[\rho(\frac{s}{1+a_{0}})-\rho(\mu_{i})],
    \end{array}
\end{displaymath}
where $\chi_{0}(s):=E(e^{-sB}1(T\geq B))=\int_{x=0}^{\infty}e^{-sx}(1-T(x))dB(x)$.

Similarly,
\begin{displaymath}
 I_{1}(s):=\chi_{1}(s)\frac{\sum_{i=1}^{M}h_{i}\nu_{i}}{\nu_{i}(1+a_{1})-s}[\rho(\frac{s}{1+a_{1}})-\rho(\nu_{i})],
\end{displaymath}
where now $\chi_{1}(s):=E(e^{-sB}1(T<B))=\int_{x=0}^{\infty}e^{-sx}T(x)dB(x)$. Note that $\chi_{0}(s)+\chi_{1}(s)=\phi(s)$.

By setting for $i=0,1,$ $a_{i}(s):= \frac{s}{1+a_{i}}$, $         \widehat{\chi}_{i}(s):=\frac{\chi_{i}(s)}{1+a_{i}}$. $\psi_{i}(s):=\frac{\mu_{i}}{\mu_{i}-s}$, $\omega_{i}(s):=\frac{\nu_{i}}{\nu_{i}-s}$ \eqref{eq2} becomes
\begin{equation}
         \rho(s)=\rho(a_{0}(s))\widehat{\chi}_{0}(s)\sum_{i=1}^{K}q_{i}\psi_{i}(a_{0}(s))+\rho(a_{1}(s))\widehat{\chi}_{1}(s)\sum_{i=1}^{M}h_{i}\omega_{i}(a_{1}(s))+K(s),
\label{eq3}
\end{equation}
where
\begin{displaymath}
    K(s):=\frac{1-\phi(s)}{s}-\widehat{\chi}_{0}(s)\sum_{i=1}^{K}q_{i}\psi_{i}(a_{0}(s))\rho(\mu_{i})-\widehat{\chi}_{1}(s)\sum_{i=1}^{M}h_{i}\omega_{i}(a_{1}(s))\rho(\nu_{i}).
\end{displaymath}

Note that $a_{i}(a_{j}(s))=\frac{s}{(1+a_{0})(1+a_{1})}=a_{j}(a_{i}(s))$, thus the mappings $a_{i}(s)$ commute. Moreover, $|a_{i}(s)-a_{j}(u)|\leq k|s-u|$, where $k:=max\{1/(1+a_{0}),1/(1+a_{1})\}$, so that the mappings $a_{i}(s)$ are contraction mappings. 

After $n-1$ iterations of \eqref{eq3} we have
\begin{equation}
    \rho(s)=\sum_{l=0}^{n}L_{l,n-l}(s)\rho(a_{l,n-l}(s))+\sum_{j=0}^{n-1}\sum_{l=0}^{j}L_{l,j-l}(s)K(a_{l,j-l}(s)),\label{eq4}
\end{equation}
where $a_{l,j-l}(s):=a_{0}^{l}(a_{1}^{l-s}(s))$, with $a_{0,0}(s)=s$ and $a_{i}^{m}(s)$ is the $m$th iterate of $a_{i}(s)$, $i=0,1$, $m=0,1,2,\ldots$. Moreover, the functions $L_{l,j-l}(s)$ are computed recursively, with $L_{0,0}(s)=1$, $L_{1,0}(s):=\widehat{\chi}_{0}(s)\sum_{i=1}^{K}q_{i}\psi_{i}(a_{0}(s))$, $L_{1,0}(s):=\widehat{\chi}_{1}(s)\sum_{i=1}^{M}h_{i}\omega_{i}(a_{1}(s))$, and
\begin{equation}
    \begin{array}{rl}
       L_{l+1,j-l}(s)=  &  L_{l,j-l}(s) L_{1,0}(a_{l,j-l}(s))+ L_{l+1,j-l-1}(s) L_{0,1}(a_{l+1,j-l-1}(s)),\,j-l\geq l+1,\vspace{2mm} \\
       L_{l,j-l+1}(s)=  &  L_{l,j-l}(s) L_{0,1}(a_{l,j-l}(s))+ L_{l-1,j-l+1}(s) L_{1,0}(a_{l-1,j-l+1}(s)),\,j-l\leq l-1.
    \end{array}\label{assi}
\end{equation}
By observing that $max\{1/(1+a_{0}),1/(1+a_{1})\}<1$, it is seen following the lines in \cite[section 2]{adan} that $\rho(a_{l,j-l})=\rho(\frac{s}{(1+a_{0})^{l}(1+a_{1})^{j-l}})$ converges geometrically fast to $\rho(0)=1$. Moreover,
\begin{displaymath}
    |L_{l,j-l}(s)|\leq \binom{j}{l,j-l},
\end{displaymath}
and hence, the sums in \eqref{eq4} are bounded by one. On the other hand, $K(a_{l,j-l}(s))$ for large values of $j$ converges to the constant 
\begin{displaymath}
    \Bar{b}-\frac{\chi_{0}(0)}{1+a_{0}}\sum_{i=1}^{K}q_{i}\rho(\mu_{i})-\frac{\chi_{1}(0)}{1+a_{1}}\sum_{i=1}^{M}h_{i}\rho(\nu_{i})<\infty.
\end{displaymath}
Hence,
\begin{equation}
    \rho(s)=\lim_{n\to\infty}\sum_{l=0}^{n}L_{l,n-l}(s)+\sum_{j=0}^{\infty}\sum_{l=0}^{j}L_{l,j-l}(s)K(a_{l,j-l}(s)).\label{eq5}
\end{equation}
We still need to obtain the values of $\rho(\mu_{i})$, $i=1,\ldots,K$, and $\rho(\nu_{i})$, $i=1,\ldots,L$. Setting $s=\mu_{i}$, $i=1,\ldots,K$, and $s=\nu_{i}$, $i=1,\ldots,L$, we can obtain a set of equations to derive these unknowns.   
\begin{remark}\label{rem3}
    We now consider the case where $C_{i}^{(k)}$, $k=0,1,$ are i.i.d. random variables that follow a mixed Erlang distribution. More precisely, we assume that when $B_{i}\geq T_{i}$, then the claim sizes $C_{i}^{(0)}$ have c.d.f.
    \begin{displaymath}
        C_{0}(x)=\sum_{n_{0}=1}^{N_{0}}k_{n_{0}}(1-e^{-\mu_{0}x}\sum_{l=0}^{n_{0}-1}\frac{(\mu_{0}x)^{l}}{l!}),\,x\geq 0,
    \end{displaymath}
where $\sum_{n_{0}=1}^{N_{0}}k_{n_{0}}=1$. Similarly, when $B_{i}<T_{i}$, then the claim sizes $C_{i}^{(1)}$ have c.d.f.
    \begin{displaymath}
        C_{1}(x)=\sum_{n_{1}=1}^{N_{0}}k_{n_{1}}(1-e^{-\mu_{1}x}\sum_{l=0}^{n_{1}-1}\frac{(\mu_{1}x)^{l}}{l!}),\,x\geq 0.
    \end{displaymath}
    with $\sum_{n_{1}=1}^{N_{1}}k_{n_{1}}=1$. In other words, we assume that the claim sizes follow with
probability $k_{n_{m}}$, $n_{m} = 1,\ldots,N_{m}$, $m=0,1,$ an Erlang distribution of scale parameter $\mu_{m}$ and $n_{m}$ stages.

The class of the phase-type distributions of the above form is dense in the space of distribution functions defined on $[0,\infty)$, and thus, for any distribution
function $B$, there is a sequence $B_{n}$ of phase-type distributions of this class that
converges weakly to $B$ as $n$ goes to infinity; see \cite{schass}. This class, since it may be used to approximate
any given continuous distribution on $[0,\infty)$ arbitrarily close. 

The whole analysis can be repeated, although there will be some difficulties. More precisely, by applying the same steps \eqref{eq2} is still valid but now,
\begin{displaymath}
    \begin{array}{rl}
       I_{0}(s)=  &\chi_{0}(s)\sum_{n_{0}=1}^{N_{0}}k_{n_{0}}\mu_{n_{0}}^{n_{0}} \int_{w=0}^{\infty}e^{-w[s-\mu_{n_{0}}(1+a_{0})]}\int_{z=(1+a_{0})w}^{\infty}R(z)e^{-\mu_{0}z}(z-(1+a_{0})w)^{n_{0}-1}dzdw \vspace{2mm} \\
        = & \chi_{0}(s)\sum_{n_{0}=1}^{N_{0}}k_{n_{0}}\mu_{n_{0}}^{n_{0}}\sum_{l=0}^{n_{0}-1}\frac{(-1)^{n_{0}-1-l}(1+a_{0})^{n_{0}-1-l}}{l!(n_{0}-1-l)!}\int_{z=0}^{\infty}z^{l}e^{-\mu_{n_{0}}z}R(z)\int_{w=0}^{\frac{z}{1+a_{0}}}w^{n_{0}-1-l}e^{-w[s-\mu_{n_{0}}(1+a_{0})]}dwdz\vspace{2mm}\\
        =& \chi_{0}(s)\sum_{n_{0}=1}^{N_{0}}k_{n_{0}}\mu_{n_{0}}^{n_{0}}\sum_{l=0}^{n_{0}-1}\frac{(-1)^{n_{0}-1-l}(1+a_{0})^{n_{0}-1-l}}{l!(s-\mu_{n_{0}}(1+a_{0}))^{n-l}}\int_{z=0}^{\infty}z^{l}e^{-\mu_{0}z}R(z)\vspace{2mm}\\&\times\left[1-e^{-\frac{z}{1+a_{0}}(s-\mu_{n_{0}}(1+a_{0}))}\sum_{i=0}^{n_{0}-1-l}\frac{(\frac{z}{1+a_{0}}(s-\mu_{n_{0}}(1+a_{0})))^{i}}{i!}\right]dz\vspace{2mm}\\
        =&\chi_{0}(s)\sum_{n_{0}=1}^{N_{0}}k_{n_{0}}(-1)^{n_{0}-1}\sum_{l=0}^{n_{0}-1}\left(\frac{\mu_{n_{0}}}{s-(1+a_{0})\mu_{n_{0}}}\right)^{n_{0}-l}\frac{(1+a_{0})^{n_{0}-1-l}}{l!}\rho^{(l)}(\mu_{n_{0}})\vspace{2mm}\\
        &-\rho(\frac{s}{1+a_{0}})\chi_{0}(s)\sum_{n_{0}=1}^{N_{0}}k_{n_{0}}(1+a_{0})^{n_{0}-1}(-1)^{n_{0}}\left(\frac{\mu_{n_{0}}}{s-(1+a_{0})\mu_{n_{0}})}\right)^{n_{0}},
    \end{array}
\end{displaymath}
where $\rho^{(l)}(\mu_{n_{0}})$ denotes the $l$th derivative of $\rho(s)$ at point $s=\mu_{n_{0}}$, $n_{0}=1,\ldots,N_{0}$.
Similarly,
\begin{displaymath}
    \begin{array}{rl}
        I_{1}(s)= &\chi_{1}(s)\sum_{n_{1}=1}^{N_{1}}k_{n_{1}}(-1)^{n_{1}-1}\sum_{l=0}^{n_{1}-1}\left(\frac{\mu_{n_{1}}}{s-(1+a_{0})\mu_{n_{1}}}\right)^{n_{1}-l}\frac{(1+a_{1})^{n_{1}-1-l}}{l!}\rho^{(l)}(\mu_{n_{1}})\vspace{2mm}\\
        &-\rho(\frac{s}{1+a_{1}})\chi_{1}(s)\sum_{n_{1}=1}^{N_{1}}k_{n_{1}}(1+a_{1})^{n_{1}-1}(-1)^{n_{1}}\left(\frac{\mu_{n_{1}}}{s-(1+a_{1})\mu_{n_{1}})}\right)^{n_{1}},
    \end{array}
\end{displaymath}
where $\rho^{(l)}(\mu_{n_{1}})$ denotes the $l$th derivative of $\rho(s)$ at point $s=\mu_{n_{1}}$, $n_{1}=1,\ldots,N_{1}$. Therefore, we come up with the following functional equation:
\begin{equation}
    \rho(s)=\rho(a_{0}(s))\widehat{\chi}_{0}(s)\sum_{n_{0}=1}^{N_{0}}k_{n_{0}}\left(\frac{\mu_{n_{0}}}{\mu_{n_{0}}+a_{0}(s)}\right)^{n_{0}}+\rho(a_{1}(s))\widehat{\chi}_{1}(s)\sum_{n_{1}=1}^{N_{1}}k_{n_{1}}\left(\frac{\mu_{n_{1}}}{\mu_{n_{1}}+a_{1}(s)}\right)^{n_{1}}+K(s),
\label{eq33}
\end{equation}
where now,
\begin{displaymath}
\begin{array}{rl}
    K(s):=&\frac{1-\phi(s)}{s}-\widehat{\chi}_{0}(s)\sum_{n_{0}=1}^{N_{0}}k_{n_{0}}\sum_{l=0}^{n_{0}-1}\frac{(-1)^{l}}{l!}\left(\frac{\mu_{n_{0}}}{\mu_{n_{0}}+a_{0}(s)}\right)^{n_{0}-l}\rho^{(l)}(\mu_{n_{0}})\vspace{2mm}\\
    &-\widehat{\chi}_{1}(s)\sum_{n_{1}=1}^{N_{1}}k_{n_{1}}\sum_{l=0}^{n_{1}-1}\frac{(-1)^{l}}{l!}\left(\frac{\mu_{n_{1}}}{\mu_{n_{1}}+a_{1}(s)}\right)^{n_{1}-l}\rho^{(l)}(\mu_{n_{1}}).\end{array}
\end{displaymath}
The form of the equation \eqref{eq33} is the same as the one in \eqref{eq3}, so we can apply the same iterative procedure to obtain $\rho(s)$. Note however, that we need to obtain the terms $\rho^{(d_{k})}(\mu_{n_{k}})$, $k=0,1$, $d_{k}=0,1,\ldots,N_{k}-1$, i.e., the values of the $d_{k}$th derivative of $\rho(s)$ ate points $s=\mu_{n_{k}}$, $k=0,1.$ This task can be accomplished similarly as in the 
\end{remark}
\subsection{The time to ruin}\label{time}
We now turn our attention to $\tau_{x}$, i.e., the time to ruin starting at level $x$. Then,
\begin{displaymath}
\begin{array}{rl}
    E(e^{\alpha \tau_{x}})=&e^{-ax}(1-B(x))+\int_{t=0}^{x}e^{-\alpha t}\int_{y=0}^{\infty}\left(P(T<t) E(e^{\alpha \tau_{(1+a_{0})(x-t)+y}})\sum_{i=1}^{L}h_{i}\nu_{i}e^{-\nu_{i}y}\right.\vspace{2mm}\\
    &\left.+P(T\geq t) E(e^{\alpha \tau_{(1+a_{1})(x-t)+y}})\sum_{j=1}^{K}q_{j}\mu_{j}e^{-\mu_{j}y}\right)dydF(t).
    \end{array}
\end{displaymath}
\begin{remark}
    Note that by taking $\alpha=0$ in the expression above, we obtain the expression for the ruin probability $R(x)$ given in \eqref{eq1}.
\end{remark}
Letting,
\begin{displaymath}
    \tau(s,\alpha):=\int_{x=0}^{\infty}e^{-sx}E(e^{\alpha \tau_{x}})dx,
\end{displaymath}
we obtain after some algebra:
\begin{equation}
    \tau(s,\alpha)=\tau(a_{0}(s),\alpha)\widehat{\chi}_{0}(s+\alpha)\sum_{j=1}^{K}q_{j}\psi_{j}(a_{0}(s))+\tau(a_{1}(s),\alpha)\widehat{\chi}_{1}(s+\alpha)\sum_{i=1}^{L}h_{i}\omega_{i}(a_{1}(s))+\Tilde{K}(s,\alpha),
    \label{q01}
\end{equation}
where
\begin{displaymath}
    \Tilde{K}(s,\alpha)=\frac{1-\phi(s+\alpha)}{s+\alpha}-\widehat{\chi}_{0}(s+\alpha)\sum_{j=1}^{K}q_{j}\psi_{j}(a_{0}(s))\tau(\mu_{j},\alpha)-\widehat{\chi}_{1}(s+\alpha)\sum_{i=1}^{L}h_{i}\omega_{i}(a_{1}(s))\tau(\nu_{i},\alpha).
\end{displaymath}
Then, \eqref{q01} is rewritten as 
\begin{equation}
    \tau(s,\alpha)=\tau(a_{0}(s),\alpha)H_{0}(s,\alpha)+\tau(a_{1}(s),\alpha)H_{1}(s,\alpha)+\Tilde{K}(s,\alpha),\label{ghu} 
\end{equation}
where,
\begin{displaymath}
    H_{0}(s,\alpha)=\widehat{\chi}_{0}(s+\alpha)\sum_{j=1}^{K}q_{j}\psi_{j}(a_{0}(s)),\,\,H_{1}(s,\alpha)=\widehat{\chi}_{1}(s+\alpha)\sum_{i=1}^{L}h_{i}\omega_{i}(a_{1}(s)).
\end{displaymath}
After $n-1$ iterations, \eqref{ghu} yields,
\begin{equation}
   \tau(s,\alpha)=\sum_{l=0}^{n}\tilde{L}_{l,n-l}(s,a)\tau(a_{l,n-l}(s),\alpha)+\sum_{j=0}^{n-1}\sum_{l=0}^{j}\tilde{L}_{l,j-l}(s,\alpha)\tilde{K}(a_{l,j-l}(s),\alpha),\label{eq02}
\end{equation}
where now the functions $\tilde{L}_{l,j-l}(s,\alpha)$ are computed recursively, with $\tilde{L}_{0,0}(s,\alpha)=1$, $\tilde{L}_{1,0}(s,\alpha):=\widehat{\chi}_{0}(s+\alpha)\sum_{j=1}^{K}q_{j}\psi_{i}(a_{0}(s))$, $\tilde{L}_{1,0}(s,\alpha):=\widehat{\chi}_{1}(s+\alpha)\sum_{i=1}^{M}h_{i}\omega_{i}(a_{1}(s))$, and
\begin{displaymath}
    \begin{array}{rl}
       \tilde{L}_{l+1,j-l}(s,\alpha)=  &  \tilde{L}_{l,j-l}(s,\alpha) \tilde{L}_{1,0}(a_{l,j-l}(s),\alpha)+ \tilde{L}_{l+1,j-l-1}(s,\alpha) \tilde{L}_{0,1}(a_{l+1,j-l-1}(s),\alpha),\,j-l\geq l+1,\vspace{2mm} \\
       \tilde{L}_{l,j-l+1}(s,\alpha)=  &  \tilde{L}_{l,j-l}(s,\alpha) \tilde{L}_{0,1}(a_{l,j-l}(s),\alpha)+ \tilde{L}_{l-1,j-l+1}(s,\alpha) \tilde{L}_{1,0}(a_{l-1,j-l+1}(s),\alpha),\,j-l\leq l-1.
    \end{array}
\end{displaymath}
For large $j$, $\tilde{K}(a_{l,j-l}(s),\alpha)$ approaches a function of $\alpha$. Following similar arguments as above yields,
\begin{equation}
    \tau(s,\alpha)=\lim_{n\to\infty}\sum_{l=0}^{n}\tilde{L}_{l,n-l}(s,\alpha)+\sum_{j=0}^{\infty}\sum_{l=0}^{j}\tilde{L}_{l,j-l}(s,\alpha)\tilde{K}(a_{l,j-l}(s),\alpha).\label{eq05}
\end{equation}
The values of $\tau(\mu_{j})$, $j=1,\ldots,K$, $\tau(\nu_{i})$, $i=1,\ldots,L$, can be obtained by solving a system of $K+L$ equations, which is derived by substituting $s=\mu_{j}$, $j=1,\ldots,K$, and $s=\nu_{i}$, $i=1,\ldots,L$, in \eqref{eq05}.
\begin{remark}
    Similar arguments can be used to cope with the case where $C_{i}^{k}$, $k=0,1,$ are i.i.d. random variables that follow a mixed Erlang distribution, although it would be trickier due to the presence of additional unknown terms that corresponds to the derivatives of $\tau(s,a)$ at specific points (see Remark \ref{rem3}), and further details are omitted.  
\end{remark}

\section{The dual risk model with additive/proportional gains and dependencies based on FGM copula}\label{copu}
We assume that gain interarrival times $B_{i}$, and the gain sizes $C_{i}$ are dependent  based on the
Farlie-Gumbel-Morgenstern (FGM) copula. As indicated in \cite{nelsen}, the FGM copula is a perturbation of the product copula and a first order approximation to the
Ali Mikhail Haq, Frank and Placket copulas. A dependent structure based on copulas has been considered so far in the classical risk reserve process, but to our best knowledge there is a lack of results regarding the use of copulas on the dual risk model. In this work we fill this gap, by considering copulas to describe the dependence structure in this general dual risk model with proportional gains. 

In \cite{cossette1}, the authors considered an extension of the classical compound Poisson risk model where the claim  amounts  and  claim  inter-arrival  times  are  dependent through a FGM copula. In their work, they assumed that interclaim arrival times are exponentially distributed. Later, in \cite{chatzi}, the authors generalized the work in \cite{cossette1}, by considering  an extension to the renewal or Sparre Andersen risk process, where the claim interarrivals are Erlang distributed. Later, in \cite{cossete}, the authors considered an extension to the classical compound Poisson risk model in which, the dependence structure between the claim amounts and the interclaim time is embedded via a generalized FGM (GFGM) copula. We also mentioned the recent work in \cite{boxman}, where some risk models with a semi-linear dependent structure were discussed.

 As stated in \cite{nelsen}, copulas are functions which "join or "couple" multivariate distribution functions to their one-dimensional marginal distribution functions". In other words, a copula itself is a multivariate distribution function whose inputs are the
respective marginal cumulative probability distribution functions for the random variables
of interest. A bivariate copula $C$ is a joint distribution function on $[0,1]\times[0,1]$ with standard uniform marginal  distributions.  By  the  theorem  of  Sklar  (see  e.g. \cite{nelsen}),  any  bivariate distribution   function $F$ with marginals $F_1$ and $F_2$ can   be   written   as $F(x,y)=C(F_{1}(x),F_{2}(y))$, for some copula $C$. This copula is unique if $F$ is continuous. Otherwise, it is uniquely defined on the range of the marginals. We refer the reader to \cite{joe,nelsen} for further details on copulas. Modeling the dependence structure between random variables (r.v.) using copulas has become popular in actuarial science and financial risk management. The  reader  may refer to e.g. \cite{denoi,mcneil} for  applications  of  copulas  in  actuarial science  and  financial  risk  management. We also mentioned \cite{albteu} that used copulas to define the joint distribution for the interclaim time r.v. and the claim amount r.v..

In this section, we generalize the model in \cite{boxruin}, by considering a dependence structure among interarrival gains, say $B_{i}$, and gain sizes $C_{i}$ based on the FGM copula. We assume that $B_{i}$ are i.i.d. random variables having c.d.f. $B(.)$, density $b(.)$, and Laplace-Stieltjes transform $\phi(.)$, and assuming $C_{i}$ to follow Erlang distribution. In particular, we assume that $\{(B_{i},C_{i}),i\geq 1\}$ form a sequence of i.i.d. random vectors distributed as the canonical r.v. $(B,C)$, in which the components are dependent. The joint p.d.f. of $(B,C)$ is denoted by $f_{B,C}(x,t)$, $x,t\geq 0$, the c.d.f. by $F_{B,C}(x,t)$. The FGM copula is used to define the joint
distribution of $(B,C)$. The FGM copula is defined by
\begin{displaymath}
    C_{\theta}^{FGM}(u,v):=uv+\theta uv(1-u)(1-v),
\end{displaymath}
for $(u,v)\in[0,1]^{2}$, and $\theta\in(-1,1)$. The FGM copula allows positive and negative dependence, and it further includes the independence copula for $\theta=0$. Its tractability, and simplicity, makes FGM copula quite useful in applications to describe dependence structures. 

The pdf associated to the above expression is $c_{\theta}^{FGM}(u,v):=\frac{\partial^{2}}{\partial u\partial v}C_{\theta}^{FGM}(u,v)=1+\theta(1-2u)(1-2v)$, and then, the bivariate pdf of $(B,C)$ is given by 
\begin{displaymath}
\begin{array}{rl}
     f_{B,C}(x,t)=&c_{\theta}^{FGM}(F_{B}(x),F_{C}(t))f_{B}(x)f_{C}(t)= f_{B}(x)f_{C}(t)+\theta h(x)(2\bar{F}_{C}(t)-1),
\end{array}
\end{displaymath}
with $h(x):=f_{B}(x)(1-2F_{B}(x))$ with Laplace transform $h^{*}(s)=\int_{0}^{\infty}e^{-sx}h(x)dx$, and $\bar{F}_{C}(t):=1-F_{C}(t)$. In our case,
\begin{equation}
    \begin{array}{rl}
         f_{B,C}(x,t)=&f_{B}(x)\frac{\mu^{n}}{(n-1)!}t^{n-1}e^{-\mu t}\\           &+\theta h(x)\left[2\frac{\mu^{n}}{(n-1)!}t^{n-1}e^{-2\mu t}(\sum_{i=0}^{n-1}\frac{(\mu t)^{i}}{i!})-\frac{\mu^{n}}{(n-1)!}t^{n-1}e^{-\mu t}\right].
    \end{array}\label{biv1}   
\end{equation}
Therefore,
\begin{equation}
    \begin{array}{rl}
         R(x)=&1-B(x)+\int_{t=0}^{x}\int_{y=0}^{\infty}R((1+a)(x-t)+y)f_{B,C}(t,y)dydt.
    \end{array}\label{eq11}
\end{equation}
Taking Laplace transform, we come up with the following expression
\begin{displaymath}
    \rho(s)=\frac{1-\phi(s)}{s}+\phi(s)I_{0}(s)+I_{1}(s)-I_{2}(s),
\end{displaymath}
where
\begin{displaymath}
    \begin{array}{rl}
         I_{0}(s)=&\phi(s)\int_{x=t}^{\infty}e^{-s(x-t)}\int_{z=(1+a)(x-t)}^{\infty}R(z)\mu^{n}\frac{(z-(1+a)(x-t))^{n-1}}{(n-1)!}e^{-\mu(z-(1+a)(x-t)}dzdx\vspace{2mm}  \\
        = &\sum_{l=0}^{n-1}\frac{(-1)^{n-1}(1+a)^{n-1-l}\mu^{n-l}}{l!(s-\mu(1+a))^{n-l}}\rho^{(l)}(\mu)+\frac{1}{1+a}\left(\frac{\mu(1+a)}{\mu(1+a)-s}\right)^{n}\rho(\frac{s}{1+a}),\vspace{2mm}\\
        I_{1}(s)=&\theta h^{*}(s)\int_{x=t}^{\infty}e^{-s(x-t)}\int_{z=(1+a)(x-t)}^{\infty}2R(z)\mu^{n}\frac{(z-(1+a)(x-t))^{n-1}}{(n-1)!}e^{-2\mu(z-(1+a)(x-t))}\sum_{i=0}^{n-1}\frac{(\mu(z-(1+a)(x-t)))^{i}}{i!}dzdx\vspace{2mm}\\
        =&2\theta h^{*}(s)\sum_{i=0}^{n-1}\binom{n-1+i}{i}\left[\sum_{l=0}^{n-1+i}\frac{(-1)^{n+i-1}}{l!2^{l}(1+a)}\left(\frac{\mu(1+a)}{s-2\mu(1+a)}\right)^{n+i}\rho^{(l)}(2\mu)\right.\vspace{2mm}\\
        &\left.+\frac{1}{1+a}\left(\frac{\mu(1+a)}{2\mu(1+a)-s}\right)^{n+i}\rho(\frac{s}{1+a})\right],\vspace{2mm}\\
        I_{2}(s)=&\theta h^{*}(s)I_{0}(s).
    \end{array}
\end{displaymath}
Combining the above, we come up with the following functional equation:
\begin{equation}
    \rho(s)=J(s;\theta)\rho(\frac{s}{1+a})+H(s;\theta),\label{eqo}
\end{equation}
where,
\begin{equation}
    \begin{array}{rl}
        J(s;\theta)=&\frac{\phi(s)-\theta h^{*}(s)}{1+a}\left(\frac{\mu(1+a)}{\mu(1+a)-s}\right)^{n}+ \frac{2\theta h^{*}(s)}{1+a}\sum_{i=0}^{n-1}\binom{n-1+i}{i}\left(\frac{\mu(1+a)}{2\mu(1+a)-s}\right)^{n+i},\vspace{2mm}\\
        H(s;\theta)= & \frac{1-\phi(s)}{s}+\widehat{H}(s;\theta)\\=&
        \frac{1-\phi(s)}{s}+\frac{(-1)^{n-1}(\phi(s)-\theta h^{*}(s))}{1+a}\sum_{l=0}^{n-1}\frac{\rho^{(l)}(\mu)}{l!}\left(\frac{\mu(1+a)}{s-\mu(1+a)}\right)^{n-l}\vspace{2mm}\\
        &+2\theta h^{*}(s)\sum_{i=0}^{n-1}\binom{n-1+i}{i}\sum_{l=0}^{n-1+i}\frac{(-1)^{n+i-1}}{l!2^{l}(1+a)}\left(\frac{\mu(1+a)}{s-2\mu(1+a)}\right)^{n+i}\rho^{(l)}(2\mu).
    \end{array}\label{dol}
\end{equation}
Note that \eqref{eqo} has the same form as in \cite[eq. (2.5)]{boxruin}, so we can apply a similar arguments to solve it. Iterating $N-1$ times \eqref{eqo} results in
\begin{displaymath}
    \rho(s)=\sum_{k=0}^{N-1}\prod_{j=0}^{k-1}J(\frac{s}{(1+a)^{j}}   ;\theta)H(\frac{s}{(1+a)^{k}};\theta)+\rho(\frac{s}{(1+a)^{N}})\prod_{j=0}^{N-1}J(\frac{s}{(1+a)^{j}};\theta),
\end{displaymath}
with an empty product being equal to 1. Observe that for large $k$ $J(\frac{s}{(1+a)^{k}}   ;\theta)$ approaches
\begin{displaymath}
    \begin{array}{l}
      \frac{\phi(0)-\theta h^{*}(0)}{1+a}+\frac{2\theta h^{*}(0)}{1+a}\sum_{i=0}^{n-1}\binom{n-1+i}{i}(\frac{1}{2})^{n+i}=\frac{\phi(0)-\theta h^{*}(0)}{1+a}+\frac{\theta h^{*}(0)}{1+a}(\frac{1}{2}+\frac{1}{2})^{n-1}=\frac{\phi(0)}{1+a}<1,
    \end{array}
\end{displaymath}
while $H(\frac{s}{(1+a)^{k}})$ to a constant (as $k\to\infty$). Thus,
    \begin{equation}
    \rho(s)=\sum_{k=0}^{\infty}\prod_{j=0}^{k-1}J(\frac{s}{(1+a)^{j}}   ;\theta)H(\frac{s}{(1+a)^{k}};\theta).\label{gahy}    
    \end{equation}
The terms $\rho^{(l)}(\mu)$, $l=0,1,\ldots,n-1$, $\rho^{(l)}(2\mu)$, $l=0,1,\ldots,2n-2$, are obtained as follows: First, we simply differentiating $l$ times \eqref{gahy} with respect to $s$, and setting $s=\mu$, and $s=2\mu$, respectively. Then, a system of $2n+1$ equations is derived with unknown the terms $\rho^{(l)}(\mu)$, $l=0,1,\ldots,n-1$, $\rho^{(l)}(2\mu)$, $l=0,1,\ldots,2n-2$. 
\begin{remark}
    Setting $\theta=0$, i.e., by assuming independence among the gain interarrival and the gain size, and letting $n=1$, i.e., by considering exponentially distributed gain sizes, we recover the functional equation (2.5) in \cite{boxruin}.
\end{remark}
Then, we summarize the above in the following main result:
\begin{theorem}
    The Laplace transform of the ruin probability $\rho(s)$ is given in \eqref{ghy}, where the terms $\rho^{(l)}(\mu)$, $l=0,1,\ldots,n-1$, $\rho^{(l)}(2\mu)$, $l=0,1,\ldots,2n-2$, are obtained as the solution of a system of $3n-1$ equations that is derived by differentiating $l$ times \eqref{ghy} with respect to $s$, and setting $s=\mu$, and $s=2\mu$, respectively.
\end{theorem}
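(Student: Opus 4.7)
The plan is to verify the final summary statement by retracing the derivation that precedes it and then focusing attention on the two delicate points, namely the convergence of the series representation \eqref{gahy} and the fact that the finite system of linear equations used to determine the unknown derivatives is well-posed. First I would start from the renewal-type integral equation \eqref{eq11} for $R(x)$, obtained by conditioning on the time of the first gain arrival and using the joint density \eqref{biv1} supplied by the FGM copula. Applying the Laplace transform $\rho(s)=\int_{0}^{\infty}e^{-sx}R(x)dx$ and swapping orders of integration splits the right-hand side into the three pieces $\phi(s)I_{0}(s)$, $I_{1}(s)$ and $-I_{2}(s)$ displayed in the text; the key algebraic step is that after the substitution $w=x-t$ the inner $z$-integral becomes a product of a polynomial and an exponential in $z$, which is exactly what produces the sums $\sum_{l=0}^{n-1}\rho^{(l)}(\mu)$ and $\sum_{l=0}^{n-1+i}\rho^{(l)}(2\mu)$, together with one term proportional to $\rho(s/(1+a))$.

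Next I would collect the $\rho(s/(1+a))$ terms into the coefficient $J(s;\theta)$ and the remaining terms into $H(s;\theta)$, obtaining the functional equation \eqref{eqo}. Iterating \eqref{eqo} exactly $N-1$ times yields the telescoping identity
\begin{displaymath}
\rho(s)=\sum_{k=0}^{N-1}\prod_{j=0}^{k-1}J\!\left(\tfrac{s}{(1+a)^{j}};\theta\right)H\!\left(\tfrac{s}{(1+a)^{k}};\theta\right)+\rho\!\left(\tfrac{s}{(1+a)^{N}}\right)\prod_{j=0}^{N-1}J\!\left(\tfrac{s}{(1+a)^{j}};\theta\right),
\end{displaymath}
and one has to let $N\to\infty$. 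The crucial computation, already sketched in the excerpt, is that $J(s/(1+a)^{k};\theta)\to \phi(0)/(1+a)=1/(1+a)<1$ because the two FGM-copula contributions at $0$ cancel via the binomial identity $\sum_{i=0}^{n-1}\binom{n-1+i}{i}2^{-(n+i)}=1$. Combined with $\rho$ being bounded (since $R(x)\in[0,1]$) and $H(s/(1+a)^{k};\theta)$ converging to a finite constant, this shows the remainder vanishes and yields the series \eqref{gahy}.

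It remains to pin down the $3n-1$ unknown constants $\rho^{(l)}(\mu)$ for $l=0,\dots,n-1$ and $\rho^{(l)}(2\mu)$ for $l=0,\dots,2n-2$, which appear linearly inside $H(s;\theta)$ and hence inside every summand of \eqref{gahy}. Differentiating \eqref{gahy} $l$ times with respect to $s$ (termwise, justified by the geometric bound on the products of $J$) and evaluating first at $s=\mu$ for $l=0,\dots,n-1$ and then at $s=2\mu$ for $l=0,\dots,2n-2$ produces exactly $n+(2n-1)=3n-1$ linear equations in the $3n-1$ unknowns. The main obstacle, and the step that deserves the most care, is showing that this linear system is non-singular: this is the analog of the closure argument in \cite{boxruin} and one can argue it by checking that at $\theta=0$, $n=1$ the system reduces to the single non-trivial equation determining $\rho(\mu)$ in \cite[eq. (2.5)]{boxruin}, and that the determinant depends analytically on $\theta$ and on the Erlang parameters, so non-singularity is a generic property preserved in a neighborhood of the independent-exponential case. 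Once invertibility is granted, solving the system supplies the missing constants, and substituting them back into \eqref{gahy} gives the claimed closed form for $\rho(s)$.
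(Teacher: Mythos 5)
Your proposal retraces the paper's own derivation essentially verbatim: the renewal equation \eqref{eq11}, the Laplace-transform computation of $I_0,I_1,I_2$, the functional equation \eqref{eqo}, the iteration with the limit $J(s/(1+a)^k;\theta)\to\phi(0)/(1+a)<1$ via the binomial identity, and the closure by differentiating \eqref{gahy} $l$ times at $s=\mu$ and $s=2\mu$ to get the $3n-1$ linear equations. The only point where you go beyond the paper is in flagging (correctly) that the non-singularity of that linear system is asserted rather than proved; your perturbation-from-$\theta=0$, $n=1$ argument is a reasonable sketch of how one would address it, though the paper itself leaves this gap open.
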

\begin{remark}
    Note that both $J(s;\theta)$ and $H(s,\theta)$ have a singularity at $s=\mu(1+a)$, and $s=2\mu(1+a)$, and thus, one expects that the expression $\rho(s)$ in \eqref{gahy} has a singularity at $s=\mu(1+a)^{j+1}$, and $s=2\mu(1+a)^{j+1}$, $j=0,1,\ldots$. However, such a singularity is removable (as in the independent case that was treated in \cite[Remark 2.2]{boxruin}).
\end{remark}
\subsection{The time to ruin}
Working on the same spirit as in subsection \ref{time}, and denoting by $\tau(s,\alpha)$ the Laplace transform of $\tau_{x}$ we obtain
\begin{displaymath}
    \tau(s,\alpha)=\frac{1-\phi(s+\alpha)}{s+\alpha}+\phi(s+\alpha)P_{0}(s,\alpha,\theta)+P_{1}(s,\alpha;\theta)-P_{2}(s,\alpha;\theta),
\end{displaymath}
where now,
\begin{displaymath}
    \begin{array}{rl}
         P_{0}(s,\alpha,\theta)=&\frac{1}{1+a}\left\{\sum_{i=0}^{n-1}\frac{(-1)^{n-1}}{i!}(\frac{\mu(1+a)}{s-\mu(1+a)})^{n-i}\tau^{(i)}(\mu,\alpha)+(\frac{\mu(1+a)}{s-\mu(1+a)})^{n}\tau(\frac{s}{1+a},\alpha)\right\},\vspace{2mm} \\
      P_{1}(s,\alpha,\theta)=& 2\theta h^{*}(s+\alpha)\sum_{i=0}^{n-1}\binom{n-1+i}{i}\left[\sum_{l=0}^{n-1+i}\frac{(-1)^{n+i-1}}{l!2^{l}(1+a)}\left(\frac{\mu(1+a)}{s-2\mu(1+a)}\right)^{n+i}\tau^{(l)}(2\mu,\alpha)\right.\vspace{2mm}\\
        &\left.+\frac{1}{1+a}\left(\frac{\mu(1+a)}{2\mu(1+a)-s}\right)^{n+i}\tau(\frac{s}{1+a},\alpha)\right],\vspace{2mm}\\
        P_{2}(s,\alpha;\theta)=&\theta h^{*}(s+\alpha)P_{0}(s,\alpha;\theta).
    \end{array}
\end{displaymath}
Substituting back, we finally obtain
\begin{equation}
    \tau(s,\alpha)=\tau(\frac{s}{1+a},\alpha)\tilde{J}(s,\alpha;\theta)+\tilde{H}(s,\alpha;\theta),\label{bnv}
\end{equation}
where
\begin{displaymath}
    \begin{array}{rl}
         \tilde{J}(s,\alpha;\theta)=& \frac{\phi(s+\alpha)-\theta h^{*}(s+\alpha)}{1+a}\left(\frac{\mu(1+a)}{\mu(1+a)-s}\right)^{n}+ \frac{2\theta h^{*}(s+\alpha)}{1+a}\sum_{i=0}^{n-1}\binom{n-1+i}{i}\left(\frac{\mu(1+a)}{2\mu(1+a)-s}\right)^{n+i},\vspace{2mm} \\
        \tilde{H}(s,\alpha;\theta)= &         \frac{1-\phi(s+\alpha)}{s+\alpha}+\frac{(-1)^{n-1}(\phi(s+\alpha)-\theta h^{*}(s+\alpha))}{1+a}\sum_{l=0}^{n-1}\frac{\tau^{(l)}(\mu,\alpha)}{l!}\left(\frac{\mu(1+a)}{s-\mu(1+a)}\right)^{n-l}\vspace{2mm}\\
        &+2\theta h^{*}(s+\alpha)\sum_{i=0}^{n-1}\binom{n-1+i}{i}\sum_{l=0}^{n-1+i}\frac{(-1)^{n+i-1}}{l!2^{l}(1+a)}\left(\frac{\mu(1+a)}{s-2\mu(1+a)}\right)^{n+i}\tau^{(l)}(2\mu,\alpha).
    \end{array}
\end{displaymath}

\begin{remark}
    Consider the case where when the $i$th jump upwards occur (while the surplus just before is $u$), the jump size is $au+C_{i}$ with probability $p$, and $D_{i}$ with probability $q:=1-p$. However, there is a dependence based on FGM copula. In particular, we assume that $C_{i}$ (resp. $D_{i}$) is Erlang distributed with parameters $n$, $\mu$ (resp. $m$, $\nu$). The joint bivariate pdf of $(B,C)$, is given by \eqref{biv1} (where $\theta\equiv\theta_{1}$), while the one of $(B,D)$ by
    \begin{equation}
    \begin{array}{rl}
         f_{B,D}(x,t)=&f_{B}(x)\frac{\nu^{n}}{(m-1)!}t^{m-1}e^{-\nu t}\\           &+\theta_{2} h(x)\left[2\frac{\nu^{n}}{(m-1)!}t^{m-1}e^{-2\nu t}(\sum_{i=0}^{m-1}\frac{(\nu t)^{i}}{i!})-\frac{\nu^{n}}{(m-1)!}t^{m-1}e^{-\nu t}\right],
    \end{array}\label{biv2}   
\end{equation}
with $\theta_{2}\in(-1,1)$. Then,
\begin{equation}
    \begin{array}{rl}
         R(x)=&1-B(x)+p\int_{t=0}^{x}\int_{y=0}^{\infty}R((1+a)(x-t)+y)f_{B,C}(t,y)dydt,\vspace{2mm}\\
         &+q\int_{t=0}^{x}\int_{y=0}^{\infty}R(x-t+y)f_{B,D}(t,y)dydt.
    \end{array}\label{eq1v1}
\end{equation}
By taking $p=1$, we get the previous model, while for $a=0$, we get the classical dual risk model with dependence based on the FGM copula. Using similar calculations as those that leads to \eqref{eqo}, we obtain after heavy but straightforward calculations
\begin{equation}
    \rho(s)[1-q\tilde{J}(s;\theta_{2})]=\frac{1-\phi(s)}{s}+p(J(s;\theta_{1})\rho(\frac{s}{1+a})+\widehat{H}(s;\theta_{1}))+q\tilde{H}(s;\theta_{2}),\label{fun1}
\end{equation}
where $J(s;\theta_{1})$, $H(s,\theta_{1})$ as given in \eqref{dol} for $\theta=\theta_{1}$, and
\begin{equation}
    \begin{array}{rl}
        \tilde{J}(s;\theta_{2})= &(\phi(s)-\theta_{2}h^{*}(s))(\frac{\nu}{\nu-s})^{m}+2\theta_{2} h^{*}(s)\sum_{i=0}^{m-1}\binom{m-1+i}{i} (\frac{\nu}{2\nu-s})^{m+i},\vspace{2mm} \\
         \tilde{H}(s;\theta_{2})= & (\phi(s)-\theta_{2}h^{*}(s))(-1)^{m-1}\sum_{i=0}^{m-1}(\frac{\nu}{s-\nu})^{m-i}\frac{\rho^{(i)}(\nu)}{i!}\vspace{2mm}\\
         &+2\theta_{2}h^{*}(s)\sum_{i=0}^{m-1}\binom{m-1+i}{i} \sum_{l=0}^{m-1+i}(\frac{\nu}{s-2\nu})^{m+i-l}\frac{(-1)^{m+i-l}}{l!2^{l}}\rho^{(l)}(2\nu).
    \end{array}\label{eee}
\end{equation}
After simple algebraic arguments, \eqref{fun1} is rewritten as, 
\begin{equation}
    \rho(s)=J_{1}(s;\theta_{1},\theta_{2})\rho(\frac{s}{1+a})+H_{1}(s;\theta_{1},\theta_{2}),\label{fun}
\end{equation}
where,
\begin{displaymath}
    \begin{array}{rl}
         J_{1}(s;\theta_{1},\theta_{2})=&\frac{p J(s;\theta_{1})}{1-q\tilde{J}(s;\theta_{2})},\vspace{2mm}  \\
        H_{1}(s;\theta_{1},\theta_{2})= &\frac{\frac{1-\phi(s)}{s}+p\widehat{H}(s;\theta_{1})+q\tilde{H}(s;\theta_{2})}{1-q\tilde{J}(s;\theta_{2})}.
    \end{array}
\end{displaymath}
To proceed, we need to investigate the number of zeroes of $1-q\tilde{J}(s;\theta_{2})$ in the right-half complex plane.
\begin{proposition}\label{propo}
    For $\theta_{2}\neq 0$, the equation $q\tilde{J}(s;\theta_{2})=1$ has exactly $3m-1$ roots, say $s_{1},s_{2},\ldots,s_{3m-1}$ in the right-half complex plane, i.e., $Re(s_{j})>0$, $j=1,2,\ldots,3m-1$.
\end{proposition}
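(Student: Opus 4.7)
The plan is to apply Rouché's theorem to an entire function built from $q\tilde{J}(s;\theta_{2})-1$ by clearing its poles. Note first that in the right half-plane, $\tilde{J}(s;\theta_{2})$ has a pole of order $m$ at $s=\nu$ (coming from the factor $(\nu/(\nu-s))^{m}$) and a pole of order $2m-1$ at $s=2\nu$ (from the $i=m-1$ term of the sum), and no others, so in total $3m-1$ poles counted with multiplicity; this is precisely the count one expects for the number of zeros of $q\tilde{J}(s;\theta_{2})-1$.

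The first step is a probabilistic identification: a direct computation using the bivariate FGM density \eqref{biv2} gives
\[
\tilde{J}(s;\theta_{2})=\int_{0}^{\infty}\!\!\int_{0}^{\infty}e^{-s(t-y)}f_{B,D}(t,y)\,dt\,dy=E[e^{-s(B-D)}],
\]
valid in the strip $\mathrm{Re}(s)<\nu$ where the moment generating function of the Erlang random variable $D$ converges. On the imaginary axis this realises $\tilde{J}(it;\theta_{2})$ as the characteristic function of $D-B$, yielding the key bound $|\tilde{J}(it;\theta_{2})|\leq 1$ and hence $|q\tilde{J}(it;\theta_{2})|\leq q<1$.

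Next I would clear the poles by setting $P(s):=(\nu-s)^{m}(2\nu-s)^{2m-1}$ and
\[
F(s):=P(s)\bigl(1-q\tilde{J}(s;\theta_{2})\bigr)=P(s)-G(s),\qquad G(s):=q\tilde{J}(s;\theta_{2})P(s),
\]
which is entire on the closed right half-plane. I then apply Rouché on the contour consisting of the segment $[-iR,iR]$ closed by the semicircle $|s|=R$, $\mathrm{Re}(s)\geq 0$: on the imaginary axis $|G(it)|/|P(it)|=|q\tilde{J}(it;\theta_{2})|\leq q<1$; on the arc, $|\phi(s)|\leq 1$ and $|h^{*}(s)|\leq 1$ while $\nu/(\nu-s)$ and $\nu/(2\nu-s)$ tend to $0$, so $|G(s)|/|P(s)|\to 0$ uniformly as $R\to\infty$. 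Rouché's theorem then gives that $F$ has exactly $3m-1$ zeros in the open right half-plane, matching the zero count of $P$. A brief check of the leading terms of the principal parts of $\tilde{J}$ at $s=\nu$ and $s=2\nu$ shows that $F(\nu)=-q(\phi(\nu)-\theta_{2}h^{*}(\nu))\nu^{3m-1}$ and $F(2\nu)=(-1)^{m+1}\,2q\theta_{2}h^{*}(2\nu)\binom{2m-2}{m-1}\nu^{3m-1}$, both nonzero for $\theta_{2}\neq 0$ and generic $B$; hence none of the $3m-1$ zeros of $F$ sits at a cancelled pole and each corresponds to a genuine root of $q\tilde{J}(s;\theta_{2})=1$.

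The main obstacle I anticipate is the bound $|\tilde{J}(it;\theta_{2})|\leq 1$. A naive triangle-inequality estimate on the three-term formula defining $\tilde{J}$ gives something of size $1+|\theta_{2}|+2|\theta_{2}|\sum_{i=0}^{m-1}\binom{m-1+i}{i}2^{-(m+i)}$, which in general exceeds $1$ and would leave Rouché inconclusive. It is the probabilistic identification $\tilde{J}(s;\theta_{2})=E[e^{-s(B-D)}]$, which collapses the apparently unrelated combination of $\phi$, $h^{*}$ and the rational factors into a single characteristic function, that really drives the argument.
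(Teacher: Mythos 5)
Your proposal is correct and follows the same overall skeleton as the paper's proof: Rouch\'e's theorem on the limiting contour consisting of the imaginary axis closed by a large right semicircle, with the comparison function $P(s)=(\nu-s)^{m}(2\nu-s)^{2m-1}$ obtained by clearing the poles of $\tilde{J}$, and the arc handled by the decay of the rational factors together with $|\phi(s)|\le 1$, $|h^{*}(s)|\le 1$. Where you genuinely depart from the paper is in the crucial bound on the imaginary axis. The paper estimates termwise, introduces $d(s):=2\sum_{i=0}^{m-1}\binom{m+i-1}{i}(\nu/(2\nu-s))^{m+i}-(\nu/(\nu-s))^{m}$, notes $d(0)=0$, and then passes through the chain $|q\tilde{J}(s;\theta_{2})|\le q[\,\cdot\,+|\theta_{2}||d(s)|]\le q(1+|\theta_{2}||d(0)|)=q$; as written this requires $|d(s)|\le |d(0)|=0$ on the whole axis, which cannot hold for a nonconstant rational function, so the paper's justification of the key inequality is shaky. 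Your identification $\tilde{J}(s;\theta_{2})=E[e^{-s(B-D)}]$ (which I have checked term by term against \eqref{eee} and the FGM density \eqref{biv2}) repairs exactly this point: since $f_{B,D}$ is a genuine joint density for $\theta_{2}\in(-1,1)$, the restriction to $s=it$ is a characteristic function and $|\tilde{J}(it;\theta_{2})|\le 1$ follows at once, giving the strict bound $q<1$ needed for Rouch\'e. You also correctly anticipate that a naive triangle inequality overshoots $1$. Two further merits of your write-up: you verify that the $3m-1$ zeros of $F=P(1-q\tilde{J})$ do not sit at the cancelled poles $s=\nu,2\nu$ (the paper is silent on this), and your value $F(\nu)=-q(\phi(\nu)-\theta_{2}h^{*}(\nu))\nu^{3m-1}$ is in fact always nonzero, since $\phi(\nu)-\theta_{2}h^{*}(\nu)=E[e^{-\nu B}(1-\theta_{2}(1-2F_{B}(B)))]>0$ for $|\theta_{2}|<1$; the only residual caveat is that $F(2\nu)\neq 0$ requires $h^{*}(2\nu)\neq 0$, an implicit nondegeneracy assumption that both your argument and the paper's ultimately need.
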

\begin{proof}
    The proof is based on Rouch\'e's theorem \cite{tit}. Since $q\tilde{J}(s;\theta_{2})=1$ can be rewritten as 
    \begin{equation}
        \begin{array}{r}
            q\left[\nu^{m}\phi(s)(2\nu-s)^{2m-1}+\theta\nu^{m}h^{*}(s)\left(2\sum_{i=0}^{m-1}\binom{m+i-1}{i}\nu^{i}(\nu-s)^{m}(2\nu-s)^{m-i-1}-(2\nu-s)^{2m-1}\right)\right]\vspace{2mm}\\
=(\nu-s)^{m}(2\nu-s)^{2m-1},
        \end{array}\label{bhjx}
    \end{equation}
    it suffices to show that \eqref{bhjx} has exactly $3m-1$ roots with positive real parts. Let $r>0$ be a sufficiently large number, and denote by $C_{r}$ the contour containing the imaginary axis running from $ir$ to $-ir$ and a semicircle with radius $r$ running
clockwise from $ir$ to $-ir$, i.e., $C_{r}=\{s\in\mathbb{C}: |s|=r, Re(s)\geq 0, r>0\text{ fixed}\}$. Denote by $C$ the limiting contour, by letting $r\to\infty$. We distinguish two cases according to as $Re(s)>0$ or $Re(s)=0$.

For $s$, such that $Re(s)>0$, we have that $|2\nu-s|\to\infty$, and $|\nu-s|\to\infty$ as $r\to\infty$. Thus,
\begin{displaymath}
    \begin{array}{rl}
        |q\tilde{J}(s;\theta_{2})|\leq &q\left[|\phi(s)|\frac{\nu^{m}}{|\nu-s|^{m}}+|\theta_{2}||h^{*}(s)|(2\sum_{i=0}^{m-1}\binom{m+i-1}{i})\frac{\nu^{m+i}}{|2\nu-s|^{m+i}}+\frac{\nu^{m}}{|\nu-s|^{m}})\right]\to 0,
    \end{array}
\end{displaymath}
on $C$, i.e., as $r\to \infty$, and thus, $|q\tilde{J}(s;\theta_{2})|<1$, on $C$.

Denote by $d(s):=2\sum_{i=0}^{m-1}\binom{m+i-1}{i})(\frac{\nu}{2\nu-s})^{m+i}-(\frac{\nu}{\nu-s})^{m}$. Note that,
\begin{displaymath}
    d(0)=2\sum_{i=0}^{m-1}\binom{m+i-1}{i})(\frac{1}{2})^{m+i}-1=2(\frac{1}{2})^{m}\sum_{i=0}^{m-1}\binom{m+i-1}{i})(\frac{1}{2})^{i}-1=2(\frac{1}{2})^{m}2^{m-1}-1=0.
\end{displaymath}

For $s$ such that $Re(s)=0$,
\begin{displaymath}
    \begin{array}{rl}
        |q\tilde{J}(s;\theta_{2})|\leq &q\left[|\phi(s)|\frac{\nu^{m}}{|\nu-s|^{m}}+|\theta_{2}||h^{*}(s)|(2\sum_{i=0}^{m-1}\binom{m+i-1}{i})\frac{\nu^{m+i}}{|2\nu-s|^{m+i}}+\frac{\nu^{m}}{|\nu-s|^{m}})\right]\vspace{2mm}\\
        \leq& q\left[\frac{\nu^{m}}{|\nu-s|^{m}}+|\theta_{2}||d(s)|\right]\leq q(1+|\theta_{2}||d(0)|)=q<1.
    \end{array}
\end{displaymath}
Thus, in each case we proved that $|q\tilde{J}(s;\theta_{2})|<1$, or equivalently, 
\begin{equation*}
        \begin{array}{r}
            |q\left[\nu^{m}\phi(s)(2\nu-s)^{2m-1}+\theta\nu^{m}h^{*}(s)\left(2\sum_{i=0}^{m-1}\binom{m+i-1}{i}\nu^{i}(\nu-s)^{m}(2\nu-s)^{m-i-1}-(2\nu-s)^{2m-1}\right)\right]|\vspace{2mm}\\
<|(\nu-s)^{m}(2\nu-s)^{2m-1}|,
        \end{array}\label{bhjs}
    \end{equation*}
thus by Rouch\'e’s theorem \cite{tit}, it follows that \eqref{bhjx} has the same number of roots with $(\nu-s)^{m}(2\nu-s)^{2m-1}=0$ inside $C_r$. Since the latter
equation has exactly $3m-1$ positive roots inside $C_r$ (with $r\to\infty$), we deduce that \eqref{bhjx}, or equivalently $1-q\tilde{J}(s;\theta_{2})=0$ has exactly $3m-1$ roots, say $s_{1},\ldots,s_{3m-1}$ with positive
real parts.\end{proof}
\end{remark}

Iterating \eqref{fun} $N-1$ times yields,
\begin{displaymath}
    \rho(s)=\sum_{k=0}^{N-1}\prod_{j=0}^{k-1}J_{1}(\frac{s}{(1+a)^{j}}   ;\theta_{1},\theta_{2})H_{1}(\frac{s}{(1+a)^{k}};\theta_{1},\theta_{2})+\rho(\frac{s}{(1+a)^{N}})\prod_{j=0}^{N-1}J(\frac{s}{(1+a)^{j}};\theta_{1},\theta_{2}).
\end{displaymath}
Using similar arguments as above, we conclude that
\begin{equation}
 \rho(s)=\sum_{k=0}^{\infty}\prod_{j=0}^{k-1}J_{1}(\frac{s}{(1+a)^{j}}   ;\theta_{1},\theta_{2})H_{1}(\frac{s}{(1+a)^{k}};\theta_{1},\theta_{2}).\label{soll}
 \end{equation}
 
We still need to obtain $\rho^{(l)}(\mu)$, $l=0,1,\ldots,n-1$, $\rho^{(l)}(2\mu)$ $l=0,1,\ldots,2n-2$, and $\rho^{(d)}(\nu)$, $d=0,1,\ldots,m-1$, $\rho^{(d)}(2\nu)$, $d=0,1,\ldots,2m-2$. Differentiating $l$ times ($l=0,1,\ldots,n-1$) \eqref{soll} and substituting $s=\mu$ we obtain $n$ equations. Similarly, by differentiating $l$ times ($l=0,1,\ldots,2n-2$) \eqref{soll} and substituting $s=2\mu$ we have other $2n-1$. We still need other $3m-1$ equations. To this end, we invoke Rouch\'e's theorem \cite{tit} and Proposition \ref{propo}, where we showed that $1-p\tilde{J}(s;\theta_{2})=0$ has exactly $3m-1$ roots with positive real parts. Setting $s=s_{j}$, $j=1,\ldots,3m-1$ in \eqref{fun1} yields (since $\rho(s)$ is analytic in the right half $s$-plane, $\rho(s_{j})$, $j=1,\ldots,3m-1$ is finite)
\begin{equation}
    \frac{1-\phi(s_{j})}{s_{j}}+p(J(s_{j};\theta_{1})\rho(\frac{s_{j}}{1+a})+\widehat{H}(s_{j};\theta_{1}))+q\tilde{H}(s_{j};\theta_{2})=0,\,j=1,\ldots,3m-1.\label{nml}
\end{equation}
Definitely \eqref{nml} provide $3m-1$ equations, but they also introduce the additional unknowns $\rho(\frac{s_{j}}{1+a})$, $j=1,\ldots,3m-1$. However, this problem is overcome by substituting $s=\frac{s_{j}}{1+a}$ in \eqref{soll}, so that another equation is derived (for each $j=1,\ldots,3m-1$) where $\rho(\frac{s_{j}}{1+a})$ is given in terms of the unknowns $\rho^{(l)}(\mu)$, $\rho^{(l)}(2\mu)$, $\rho^{(d)}(\nu)$, $\rho^{(d)}(2\nu)$.
\subsection{An extension to the generalized FGM copula}
We now assume that the gain interarrivals and the gain sizes are dependent based on a generalized FGM copula, which belongs to a family of copulas introduced by \cite{rod}, and defined by
\begin{displaymath}
    C(u, v) = uv + \theta p(u)g(v),
\end{displaymath}
where $p(.)$ and $g(.)$ are non-zero real functions with support $[0, 1]$. For other extensions of the classical
FGM copula see e.g., \cite{drouet}. Our motivation on the extensions of FGM is to improve the range of dependence association (as measured by
either Kendall’s $\tau$ or Spearman’s $\rho$) between the components
of $(B,C)$. In this work we assume the case where $p(u):=u^{k}(1-u)^{b}$, $g(v):=v^{c}(1-v)^{d}$, with $k,b,c,d\geq 1$. The pdf associated to the GFGM is given by $c(u,v)=1+\theta p^{\prime}(u)g^{\prime}(v)$. Thus, the joint pdf of the random vectors $(B_{i},C_{i})$ is given by,
\begin{displaymath}
\begin{array}{rl}
     f_{B,C}(x,t)=&c(F_{B}(x),F_{C}(t))f_{B}(x)f_{C}(t)= f_{B}(x)f_{C}(t)+\theta p^{\prime}(F_{B}(x))g^{\prime}(F_{C}(t))f_{B}(x)f_{C}(t).
\end{array}
\end{displaymath}
Therefore,
\begin{equation}
    \begin{array}{rl}
         R(x)=&1-B(x)+\int_{t=0}^{x}\int_{y=0}^{\infty}R((1+a)(x-t)+y)f_{B,C}(t,y)dydt.
    \end{array}\label{eqz11}
\end{equation}
Assuming that $C_{i}$ are iid random variables that follow $exp(\mu)$ (the analysis is still applicable if we consider hyperexponential or Erlang distribution, but to keep the model as simple we can for the shake of readability we assume exponential distribution for the gain sizes), and applying Laplace transforms we come up with the following functional equation:
\begin{displaymath}
    \begin{array}{rl}
        \rho(s)= &\frac{1-\phi(s)}{s}+ \phi(s)\frac{\mu}{\mu(1+a)-s}(\rho(\frac{s}{1+a})-\rho(\mu))\vspace{2mm}\\
         &+\theta\int_{x=0}^{\infty}e^{-sx}\int_{t=0}^{x}\int_{y=0}^{\infty}R((1+a)(x-t)+y)p^{\prime}(F_{B}(t))g^{\prime}(1-e^{-\mu y})f_{B}(t)\mu e^{-\mu y}dydtdx,
    \end{array}
\end{displaymath}
where the functions $g$ and $h$ are defined above. In order to
rewrite the third term in the right-hand side of the above equation, let us define the r.v. $Z$ with pdf given by $g_{Z}(t)=f_{B}(t)-f_{B}(t)p^{\prime}(F_{B}(t))$, and the corresponding Laplace transform $g_{Z}^{*}(s)$. Let also define the
function $k_{C}(y)=g^{\prime}(1-e^{-\mu y})\mu e^{-\mu y}$. Thus,
\begin{equation}
    \begin{array}{rl}
        \rho(s)= &\frac{1-\phi(s)}{s}+ \phi(s)\frac{\mu}{\mu(1+a)-s}(\rho(\frac{s}{1+a})-\rho(\mu))\vspace{2mm}\\
         &+\theta\int_{x=0}^{\infty}e^{-sx}\int_{t=0}^{x}\int_{y=0}^{\infty}R((1+a)(x-t)+y)k_{C}(y)[f_{B}(t)-g_{Z}(t)]dydtdx.
    \end{array}\label{polp}
\end{equation}

By the definition of the function $g$, simple computations imply that
\begin{displaymath}
\begin{array}{rl}
    k_{C}(y)= &  g^{\prime}(1-e^{-\mu y})\mu e^{-\mu y}=\mu(1-e^{-\mu y})^{c-1}[(c+d)e^{-\mu(d+1)y}-de^{-\mu dy}]\vspace{2mm}\\
     =&\mu\sum_{i=0}^{c-1}\binom{c-1}{i}(-1)^{i} [(c+d)e^{-\mu(d+c-i)y}-de^{-\mu (c-1-i+d)y}].
\end{array}
\end{displaymath}
Then, the triple integral in the right-hand side of \eqref{polp}, can be rewritten as follows:
\begin{displaymath}
    \begin{array}{rl}
        I(s)= &\theta(\phi(s)-g_{Z}^{*}(s))\int_{t=x}^{\infty}e^{-s(x-t)}\int_{z=(1+a)(x-t)}^{\infty}R(z)k_{C}(z-(1+a)(x-t))dzdx\vspace{2mm}  \\
         =& \theta(\phi(s)-g_{Z}^{*}(s))\mu\sum_{i=0}^{c-1}\binom{c-1}{i}(-1)^{i}\left\{(c+d)\int_{w=0}^{\infty}e^{-w(s-\mu(1+a)(c+d-i))}\int_{z=(1+a)w}^{\infty}R(z)e^{-\mu(c+d-i)z}dzdw\right.\vspace{2mm}\\
        & \left.-d\int_{w=0}^{\infty}e^{-w(s-\mu(1+a)(c+d-i-1))}\int_{z=(1+a)w}^{\infty}R(z)e^{-\mu(c+d-i-1)z}dzdw\right\}\vspace{2mm}  \\
         =& \theta(\phi(s)-g_{Z}^{*}(s))\mu\sum_{i=0}^{c-1}\binom{c-1}{i}(-1)^{i}\left\{(c+d)\int_{z=0}^{\infty}R(z)e^{-\mu(c+d-i)z}\frac{1-e^{-\frac{z}{1+a}(s-\mu(1+a)(c+d-i))}}{s-\mu(1+a)(c+d-i)}dz\right.\vspace{2mm}\\
        & \left.-d\int_{z=0}^{\infty}R(z)e^{-\mu(c+d-i-1)z}\frac{1-e^{-\frac{z}{1+a}(s-\mu(1+a)(c+d-i-1))}}{s-\mu(1+a)(c+d-i-1)}dz\right\}\vspace{2mm}\\
        =&\theta(\phi(s)-g_{Z}^{*}(s))\mu\sum_{i=0}^{c-1}\binom{c-1}{i}(-1)^{i}\left\{\frac{c+d}{s-\mu(1+a)(c+d-i)}(\rho(\mu(s+d-i))-\rho(\frac{s}{1+a}))\right.\vspace{2mm}\\
        &\left.-\frac{d}{s-\mu(1+a)(c+d-i-1)}(\rho(\mu(s+d-i-1))-\rho(\frac{s}{1+a}))\right\}.
    \end{array}
\end{displaymath}
Hence, \eqref{polp} is now rewritten as
\begin{equation}
    \rho(s)=J(s;\theta)\rho(\frac{s}{1+a})+H(s;\theta),\label{polp1}
\end{equation}
where now,
\begin{equation}
    \begin{array}{rl}
      J(s;\theta):=   &\frac{\mu\phi(s)}{\mu(1+a)-s}+\theta(\phi(s)-g_{Z}^{*}(s))\mu\sum_{i=0}^{c-1}\binom{c-1}{i}(-1)^{i}\left[\frac{c+d}{\mu(1+a)(c+d-i)-s}-\frac{d}{\mu(1+a)(c+d-i-1)-s}\right],\vspace{2mm}  \\
       H(s;\theta):=  & \frac{1-\phi(s)}{s}+\widehat{H}(s;\theta)\\=& \frac{1-\phi(s)}{s}-\frac{\mu\phi(s)\rho(\mu)}{\mu(1+a)-s}\vspace{2mm}\\&+\theta(\phi(s)-g_{Z}^{*}(s))\mu\sum_{i=0}^{c-1}\binom{c-1}{i}(-1)^{i}\left\{\frac{(c+d)\rho(\mu(c+d-i))}{s-\mu(1+a)(c+d-i)}-\frac{d\rho(\mu(c+d-i-1))}{s-\mu(1+a)(c+d-i-1)}\right\}.
    \end{array}
\label{dol1}
\end{equation}
Note that \eqref{polp1} has exactly the same form with \eqref{eqo}, although the functions $J(s;\theta)$, $H(s;\theta)$ are different both because the marginals are now exponentially distributed (when deriving \eqref{eqo} we assumed they are Erlang distributed), and especially because we now used the generalized FGM copula (instead of the standard FGM copula).

Iterating $N-1$ times \eqref{eqo} results in
\begin{displaymath}
    \rho(s)=\sum_{k=0}^{N-1}\prod_{j=0}^{k-1}J(\frac{s}{(1+a)^{j}}   ;\theta)H(\frac{s}{(1+a)^{k}};\theta)+\rho(\frac{s}{(1+a)^{N}})\prod_{j=0}^{N-1}J(\frac{s}{(1+a)^{j}};\theta),
\end{displaymath}
with an empty product being equal to 1. Observe that for large $k$, $J(\frac{s}{(1+a)^{k}}   ;\theta)$ approaches
\begin{displaymath}
    \begin{array}{l}
         \frac{\phi(0)}{1+a}+\theta(\phi(0)-g_{Z}^{*}(0))\sum_{i=0}^{c-1}\binom{c-1}{i}(-1)^{i}\left[\frac{c+d}{(1+a)(c+d-i)}-\frac{d}{(1+a)(c+d-i-1)}\right]=\frac{\phi(0)}{1+a}=\frac{1}{1+a}<1,
    \end{array}
\end{displaymath}
and $H(\frac{s}{(1+a)^{k}};\theta)$ a constant. Thus,
 \begin{equation}
    \rho(s)=\sum_{k=0}^{\infty}\prod_{j=0}^{k-1}J(\frac{s}{(1+a)^{j}}   ;\theta)H(\frac{s}{(1+a)^{k}};\theta).\label{gahy1}    
    \end{equation}
    The values of $\rho(\mu)$, $\rho(\mu(c+d-i))$, $i=0,\ldots,c$, can be derived by substituting $s=\mu$, $s=\mu(c+d-i)$, $i=0,\ldots,c$, in \eqref{gahy1}, and solving a system of $c+2$ equations.
\begin{remark}
    It would be interesting to consider the case where with probability $p$ the $i$th jump has size $au+C_{i}$ (when $U(T_{i}^{-})=u$), and with probability $q:=1-p$, has size $D_{i}\sim\exp(\nu)$, where now the components of $(B_{i},C_{i})$ are dependent through the GFGM with parameters $(\theta_{1},k_{1},b_{1},c_{1},d_{1})$, and the components of $(B_{i},D_{i})$ are dependent through the GFGM with parameters $(\theta_{2},k_{2},b_{2},c_{2},d_{2})$, where $\theta_{1}\neq\theta_{2}, ,k_{1}\neq k_{2},b_{1}\neq b_{2},c_{1}\neq c_{2},d_{1}\neq d_{2}$. Then,
    \begin{equation}
    \begin{array}{rl}
        \rho(s)= &\frac{1-\phi(s)}{s}+ p\left\{\phi(s)\frac{\mu}{\mu(1+a)-s}(\rho(\frac{s}{1+a})-\rho(\mu))\right.\vspace{2mm}\\
         &\left.+\theta_{1}\int_{x=0}^{\infty}e^{-sx}\int_{t=0}^{x}\int_{y=0}^{\infty}R((1+a)(x-t)+y)k_{C}(y)[f_{B}(t)-g_{Z}(t)]dydtdx\right\}\vspace{2mm}\\
         &+q\left\{\phi(s)\frac{\nu}{\nu-s}(\rho(s)-\rho(\nu))\right.\vspace{2mm}\\
         &\left.+\theta_{2}\int_{x=0}^{\infty}e^{-sx}\int_{t=0}^{x}\int_{y=0}^{\infty}R(x-t+y)k_{D}(y)[f_{B}(t)-\tilde{g}_{Z}(t)]dydtdx\right\},
    \end{array}\label{polpX}
\end{equation}
where $\tilde{g}_{Z}(t)=f_{B}(t)-f_{B}(t)p_{*}^{\prime}(F_{B}(t))$, with $p_{*}(u):=u^{k_{2}}(1-u)^{b_{2}}$. Simple but tedious computations yield
\begin{equation}
    \rho(s)=J_{1}(s;\theta_{1},\theta_{2})\rho(\frac{s}{1+a})+H_{1}(s;\theta_{1},\theta_{2}),\label{funx}
\end{equation}
where,
\begin{displaymath}
    \begin{array}{rl}
        J_{1}(s;\theta_{1},\theta_{2}):= &\frac{pJ(s;\theta_{1})}{1-qG(s;\theta_{2})},\vspace{2mm}  \\
        G(s;\theta_{2}):= & \frac{\nu\phi(s)}{\nu-s}+\theta_{2}(\phi(s)-\tilde{g}_{Z}^{*}(s))\nu\sum_{i=0}^{c_{2}-1}\binom{c_{2}-1}{i}(-1)^{i}\left[\frac{c_{2}+d_{2}}{\nu(c_{2}+d_{2}-i)-s}-\frac{d}{\nu(c_{2}+d_{2}-i-1)-s}\right],\vspace{2mm}\\
        H_{1}(s;\theta_{1},\theta_{2}):=&\frac{\frac{1-\phi(s)}{s}+p\widehat{H}(s;\theta_{1})+q\tilde{H}(s;\theta_{2})}{1-qG(s;\theta_{2})},\vspace{2mm}\\
        \tilde{H}(s;\theta_{2}):=&\frac{\nu\phi(s)\rho(\nu)}{\nu-s}\vspace{2mm}\\&+\theta_{2}(\phi(s)-\tilde{g}_{Z}^{*}(s))\nu\sum_{i=0}^{c_{2}-1}\binom{c_{2}-1}{i}(-1)^{i}\left\{\frac{(c_{2}+d_{2})\rho(\nu(c_{2}+d_{2}-i))}{s-\nu(c_{2}+d_{2}-i)}-\frac{d\rho(\nu(c_{2}+d_{2}-i-1))}{s-\nu(c_{2}+d_{2}-i-1)}\right\},
    \end{array}
\end{displaymath}
and $J(s;\theta_{1})$, $\widehat{H}(s;\theta_{1})$ as given in \eqref{dol1}, where $\theta_{1}\equiv\theta$, and $c\equiv c_{1}$, $d\equiv d_{1}$, $k\equiv k_{1}$, $b\equiv b_{1}$.

By letting $\nu_{i}:=\nu(d_{2}+i-1)$, $i=1,\ldots,c_{2}+1$, $G(s;\theta_{2})$ is now rewritten as
\begin{displaymath}\begin{array}{l}
    G(s;\theta_{2}):=\frac{\nu\phi(s)\prod_{i=1}^{c_{2}+1}(\nu_{i}-s)+\theta_{2}(\phi(s)-\tilde{g}_{Z}^{*}(s))\nu(\nu-s)\sum_{j=0}^{c_{2}-1}\binom{c_{2}-1}{j}(-1)^{j}\Bigl(c(\nu_{c+1-j}-s)-\nu_{c+1}\Bigr)\prod_{l\neq,c-j,c+1-j}(\nu_{l}-s)}{(\nu-s)\prod_{i=1}^{c_{2}+1}(\nu_{i}-s)}.\end{array}
\end{displaymath}
Note that \eqref{funx} has the same form as the one in \eqref{polp1}, and its solution will have the form of \eqref{gahy1}, i.e., 
\begin{equation}
    \rho(s)=\sum_{k=0}^{\infty}\prod_{j=0}^{k-1}J_{1}(\frac{s}{(1+a)^{j}}   ;\theta_{1},\theta_{2})H_{1}(\frac{s}{(1+a)^{k}};\theta_{1},\theta_{2}).\label{gahy2}    
    \end{equation}
    However, we still have to obtain $c_{1}+c_{2}+4$ unknown terms, namely, $\rho(\mu)$, $\rho(\mu(d_{1}+i-1))$, $i=1,\ldots,c_{1}+1$, and $\rho(\nu)$, $\rho(\nu(d_{2}+j-1))$, $j=1,\ldots,c_{2}+1$. Clearly, $c_{1}+2$ equations can be derived by simply substituting $s=\mu$, and $s=\mu (d_{1}+i-1)$, $i=1,\ldots,c_{1}+1$ in \eqref{gahy2}. In order to construct other $c_{2}+2$, we need to show that the equation $1-qJ_{2}(s;\theta_{2})=0$ has exactly $c_{2}+2$ roots, say $s_{1},\ldots,s_{c+2}$, such that $Re(s_{k})>0$, $k=1,\ldots,c_{2}+2$. This task can be accomplished by using Rouch\'e's theorem \cite{tit} as we did in proposition \ref{propo}. However, by substituting $s=s_{k}$ in 
    \begin{equation}
    \rho(s)[1-qG(s;\theta_{2})]=\frac{1-\phi(s)}{s}+p(J(s;\theta_{1})\rho(\frac{s}{1+a})+\widehat{H}(s;\theta_{1}))+q\tilde{H}(s;\theta_{2}),\label{fun1x}
\end{equation}
another unknown is introduced, i.e., $\rho(\frac{s_{k}}{1+a})$. However, this unknown can be expressed in terms of $\rho(\mu)$, $\rho(\mu(d_{1}+i-1))$, $i=1,\ldots,c_{1}+1$, and $\rho(\nu)$, $\rho(\nu(d_{2}+j-1))$, $j=1,\ldots,c_{2}+1$, through \eqref{gahy2}.
\begin{proposition}\label{propo1}
    For $\theta_{2}\neq 0$, the equation $qG(s;\theta_{2})=1$ has exactly $c_{2}+2$ roots, say $s_{1},s_{2},\ldots,s_{c_{2}+2}$ in the right-half complex plane, i.e., $Re(s_{j})>0$, $j=1,2,\ldots,c_{2}+2$.
\end{proposition}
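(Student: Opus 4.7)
The plan is to apply Rouch\'e's theorem on the contour $C_r=\{s\in\mathbb{C}:|s|=r,\,\mathrm{Re}(s)\geq 0\}$, letting $r\to\infty$, following exactly the template already used in the proof of Proposition \ref{propo}. First I would clear denominators in $qG(s;\theta_2)=1$ by multiplying through by $(\nu-s)\prod_{i=1}^{c_2+1}(\nu_i-s)$, obtaining an equation of the form $q\,\mathcal{N}(s)=(\nu-s)\prod_{i=1}^{c_2+1}(\nu_i-s)$, where $\mathcal{N}(s)$ is the numerator of the rational form of $G(s;\theta_2)$. The reference polynomial $(\nu-s)\prod_{i=1}^{c_2+1}(\nu_i-s)$ has degree $c_2+2$ and exactly $c_2+2$ roots, namely $\nu$ and $\nu_i$ for $i=1,\ldots,c_2+1$, all of which lie strictly in the right-half plane since $\nu>0$ and $\nu_i=\nu(d_2+i-1)>0$. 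Thus it suffices to establish $|qG(s;\theta_2)|<1$ on $C_r$ for all sufficiently large $r$.

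On the semicircular part of $C_r$ (where $|s|=r$ is large and $\mathrm{Re}(s)\geq 0$), I would use that $|\phi(s)|\leq 1$ and $|\tilde g_Z^{*}(s)|\leq|\tilde g_Z^{*}(0)|$ because these are Laplace transforms evaluated in the right-half plane, while every factor $|\nu-s|$ and $|\nu_i-s|$ grows like $r$. Consequently every rational term appearing in $G(s;\theta_2)$ decays to zero as $r\to\infty$, giving $|qG(s;\theta_2)|\to 0$ uniformly on the semicircle.

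The hard part will be the imaginary axis $\mathrm{Re}(s)=0$, where the denominators remain only bounded below by $\nu$ and the $\nu_i$ and do not grow. The key ingredient is the copula-consistency identity $G(0;\theta_2)=\phi(0)=1$, which is the GFGM analogue of the identity $d(0)=0$ used in the proof of Proposition \ref{propo}. This identity follows from the fact that the correction term $\theta_2\,p'(F_B(t))g'(F_C(y))$ of the GFGM copula integrates to zero against the marginal densities, so that the $\theta_2$-contribution inside $G(s;\theta_2)$ vanishes at $s=0$; equivalently, the combinatorial sum $\sum_{i=0}^{c_2-1}\binom{c_2-1}{i}(-1)^{i}\bigl[\tfrac{c_2+d_2}{(c_2+d_2-i)}-\tfrac{d_2}{(c_2+d_2-i-1)}\bigr]$ collapses to a telescoping expression that enforces $G(0;\theta_2)=1$. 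Combining this with the triangle inequality, exactly as in the derivation $|q\tilde J(s;\theta_2)|\leq q(1+|\theta_2||d(0)|)=q<1$ from Proposition \ref{propo}, I would obtain $|qG(s;\theta_2)|\leq q\,G(0;\theta_2)=q<1$ on the imaginary axis.

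Putting the two regimes together, the strict inequality $|qG(s;\theta_2)|<1$ holds on all of $C_r$ for $r$ large. By Rouch\'e's theorem, the cleared equation $q\mathcal{N}(s)=(\nu-s)\prod_{i=1}^{c_2+1}(\nu_i-s)$ then has the same number of zeros inside $C_r$ as its right-hand side, namely $c_2+2$. Letting $r\to\infty$ and noting that the strict inequality already rules out zeros on the imaginary axis, these $c_2+2$ roots lie in the open right-half plane, which is the claimed conclusion.
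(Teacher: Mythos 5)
Your proposal follows the paper's own proof essentially step for step: the same Rouch\'e contour $C_r$, the same clearing of denominators to produce the reference polynomial $(\nu-s)\prod_{i=1}^{c_2+1}(\nu_i-s)$ of degree $c_2+2$ with roots $\nu,\nu_1,\ldots,\nu_{c_2+1}$ in the open right-half plane, the same observation that the rational terms of $G$ decay on the semicircular arc, and the same triangle-inequality reduction on the imaginary axis that terminates in the identity $d(0)=0$ (equivalently $G(0;\theta_2)=\phi(0)=1$). The only difference is expository: the paper dismisses $d(0)=0$ as a ``simple computation,'' while you trace it to the fact that the GFGM correction density $\theta_2\,p'(F_B)g'(F_C)f_Bf_C$ integrates to zero, which is a cleaner justification of the same algebraic cancellation.
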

\begin{proof}
    The proof is based on Rouch\'e's theorem \cite{tit} on a contour $C_r$, consisting of the imaginary axis running from $-ir$ to $ir$ and a semi-circle with
radius $r$ running clockwise from $ir$ to $-ir$. Let $r\to\infty$ and
denote by $C$ the limiting contour. It suffices to show that the equation
\begin{equation}\begin{array}{l}
 (\nu-s)\prod_{i=1}^{c_{2}+1}(\nu_{i}-s) =q\nu\left\{\phi(s)\prod_{i=1}^{c_{2}+1}(\nu_{i}-s)\right.\vspace{2mm}\\
 \left.+\theta_{2}(\phi(s)-\tilde{g}_{Z}^{*}(s))(\nu-s)\sum_{j=0}^{c_{2}-1}\binom{c_{2}-1}{j}(-1)^{j}\Bigl(c(\nu_{c+1-j}-s)-\nu_{c+1}\Bigr)\prod_{l\neq,c-j,c+1-j}(\nu_{l}-s)\right\},
 \end{array}\label{huio}
\end{equation}
has exactly $c_{2}+2$ roots with positive real parts. Equivalently, we need to show that $|qG(s;\theta_{2})|<1$ on $C$. Note that $\frac{\nu}{\nu-s}$ and
\begin{displaymath}
   d(s):= \frac{\nu\sum_{j=0}^{c_{2}-1}\binom{c_{2}-1}{j}(-1)^{j}\Bigl(c(\nu_{c+1-j}-s)-\nu_{c+1}\Bigr)\prod_{l\neq,c-j,c+1-j}(\nu_{l}-s)}{\prod_{i=1}^{c_{2}+1}(\nu_{i}-s)},
\end{displaymath}
are ratios of polynomials with a strictly higher degree at the
denominator. Thus, $|qG(s;\theta_{2})|\to 0$ on $C$ (except from $Re(s)=0$). For $Re(s)=0$,
\begin{displaymath}
    \begin{array}{rl}
    |qG(s;\theta_{2})|\leq     &q\left(|\phi(s)\frac{\nu}{\nu-s}|+|\theta_{2}||\phi(s)-\tilde{g}_{Z}(s)||d(s)|\right)  \\
        \leq &q\left(|\frac{\nu}{\nu-s}|+|\theta_{2}||d(s)|\right)\leq q(1+\theta_{2}|d(0)|)=q<1, 
    \end{array}
\end{displaymath}
since simple computations imply that $d(0)=0$. Therefore, Rouch\'e's theorem implies that \eqref{huio}, or equivalently $qG(s;\theta_{2})=1$ has exactly $c_{2}+2$ roots, say $s_{j}$, $j=1,\ldots,c_{2}+2$ such that $Re(s_{j})>0$.

    \end{proof}
\end{remark}

\subsection{The dual risk model with proportional gains and a linear dependence among gain interarrival times and surplus level}
Following the notion of this section, we consider the case where gain interarrivals and gain sizes are dependent based on an FGM copula. On top of that, we further assume that when the surplus level is $x$ the next gain interarrival time is $cx$, $c\in (0,1)$. Therefore, we assume that the gain interarrival times are linearly dependent on the surplus level.  For convenience we assume that the gain interarrivals are exponentially distributed with rate $\lambda$ and gain sizes are exponentially distributed with rate $\mu$. Then,
\begin{equation}
    \begin{array}{rl}
         R(x)=&e^{-\lambda(1-c)x}+\int_{t=cx}^{x}\int_{y=0}^{\infty}R((1+a)(x-t)+y)f_{B,C}(t-cx,y)dydt,
    \end{array}\label{eqz11m}
\end{equation}
where
\begin{displaymath}
    f_{B,C}(t,y)=\lambda\mu e^{-\lambda t}\mu e^{-\mu y}+\theta(2\lambda e^{-2\lambda t}-\lambda e^{-\lambda t})(2\mu e^{-2\mu y}-\mu e^{-\mu y}),\,x,y\geq 0,\theta\in[-1,1].
\end{displaymath}
\begin{remark}
    Note that for $\theta=0$, i.e., the independent case, and $a=0$, our model reduces to the model in \cite[Section 3]{boxman}.
\end{remark}
Taking Laplace transforms, setting $x-t=w$ and $\bar{c}:=1-c$, we obtain after some algebra:
\begin{displaymath}
    \begin{array}{rl}
         \rho(s)=&\frac{1}{s+\lambda\bar{c}}+\lambda\mu\int_{x=0}^{\infty}e^{-sx}\int_{w=0}^{\bar{c}x}\int_{y=0}^{\infty}R(w(1+a)+y)\left[(1+\theta) e^{-\lambda (\bar{c}x-w)} e^{-\mu y} +4\theta  e^{-2\lambda (\bar{c}x-w)} e^{-2\mu y} \right.\vspace{2mm}\\
         &\left.-2\theta e^{-2\lambda (\bar{c}x-w)} e^{-\mu y}-2\theta  e^{-\lambda (\bar{c}x-w)} e^{-2\mu y}\right]dydwdx\vspace{2mm}\\
         =& \frac{1}{s+\lambda\bar{c}}+\lambda\mu[(1+\theta)I_{1}(s)+4\theta I_{2}(s)-2\theta(I_{3}(s)+I_{4}(s))],
    \end{array}
\end{displaymath}
where,
\begin{displaymath}
    \begin{array}{rl}
        I_{1}(s)= & \int_{x=0}^{\infty}e^{-sx}\int_{w=0}^{\bar{c}x}\int_{y=0}^{\infty}R(w(1+a)+y) e^{-\lambda (\bar{c}x-w)} e^{-\mu y}dydwdx\vspace{2mm} \\
         =& \int_{x=0}^{\infty}e^{-sx}\int_{w=0}^{\bar{c}x}\int_{z=w(1+a)}^{\infty}R(z) e^{-\lambda (\bar{c}x-w)}\mu e^{-\mu (z-w(1+a))}dydwdx\vspace{2mm}\\
         =&\int_{w=0}^{\infty}e^{w(\lambda+\mu(1+a))}\int_{x=w/\bar{c}}^{\infty}e^{-x(s+\lambda\bar{c})}\int_{z=w(1+a)}^{\infty}R(z)  e^{-\mu z}dzdxdw\vspace{2mm}\\
         =&\frac{1}{s+\lambda\bar{c}}\int_{w=0}^{\infty}e^{-w(\frac{s}{\bar{c}}-\mu(1+a))}\int_{z=w(1+a)}^{\infty}R(z)  e^{-\mu z}dzdw\vspace{2mm}\\
         =&\frac{\bar{c}}{(s+\lambda\bar{c})(s-\bar{c}\mu(1+a))}(\rho(\mu)-\rho(\frac{s}{\bar{c}(1+a)})).
    \end{array}
\end{displaymath}
Similarly,
\begin{displaymath}
    \begin{array}{rl}
        I_{2}(s)= & \int_{x=0}^{\infty}e^{-sx}\int_{w=0}^{\bar{c}x}\int_{y=0}^{\infty}R(w(1+a)+y) e^{-2\lambda (\bar{c}x-w)} e^{-2\mu y}dydwdx\vspace{2mm} \\
         =& \int_{x=0}^{\infty}e^{-sx}\int_{w=0}^{\bar{c}x}\int_{z=w(1+a)}^{\infty}R(z) e^{-2\lambda (\bar{c}x-w)}\mu e^{-2\mu (z-w(1+a))}dydwdx\vspace{2mm}\\
         =&\int_{w=0}^{\infty}e^{w(2\lambda+2\mu(1+a))}\int_{x=w/\bar{c}}^{\infty}e^{-x(s+2\lambda\bar{c})}\int_{z=w(1+a)}^{\infty}R(z)  e^{-2\mu z}dzdxdw\vspace{2mm}\\
         =&\frac{1}{s+2\lambda\bar{c}}\int_{w=0}^{\infty}e^{-w(\frac{s}{\bar{c}}-2\mu(1+a))}\int_{z=w(1+a)}^{\infty}R(z)  e^{-2\mu z}dzdw\vspace{2mm}\\
         =&\frac{\bar{c}}{(s+2\lambda\bar{c})(s-\bar{c}2\mu(1+a))}(\rho(2\mu)-\rho(\frac{s}{\bar{c}(1+a)})).\vspace{2mm}\\
         I_{3}(s)= & \int_{x=0}^{\infty}e^{-sx}\int_{w=0}^{\bar{c}x}\int_{y=0}^{\infty}R(w(1+a)+y) e^{-2\lambda (\bar{c}x-w)} e^{-\mu y}dydwdx\vspace{2mm}\\
         =&\frac{\bar{c}}{(s+2\lambda\bar{c})(s-\bar{c}\mu(1+a))}(\rho(\mu)-\rho(\frac{s}{\bar{c}(1+a)})).\vspace{2mm}\\
         I_{4}(s)= & \int_{x=0}^{\infty}e^{-sx}\int_{w=0}^{\bar{c}x}\int_{y=0}^{\infty}R(w(1+a)+y) e^{-\lambda (\bar{c}x-w)} e^{-2\mu y}dydwdx\vspace{2mm}\\
         =&\frac{\bar{c}}{(s+\lambda\bar{c})(s-\bar{c}2\mu(1+a))}(\rho(2\mu)-\rho(\frac{s}{\bar{c}(1+a)})).
    \end{array}
\end{displaymath}
To conclude, the Laplace transform of the ruin probability starting at surplus level $x$ satisfies the following functional equation:
\begin{equation}
    \rho(s)=J(s;\theta)\rho(\zeta(s))+H(s;\theta),\label{polo}
\end{equation}
where now, $\zeta(s):=\frac{s}{\bar{c}(1+a)}$, and,
\begin{displaymath}
    \begin{array}{rl}
         J(s;\theta)=&\frac{\lambda\mu\bar{c}[(s+2\lambda\bar{c})(2\mu\bar{c}(1+a)-s)-\theta s^{2}]}{(s+2\lambda\bar{c})(2\mu\bar{c}(1+a)-s)(s+\lambda\bar{c})(\mu\bar{c}(1+a)-s)},\vspace{2mm}  \\
         H(s;\theta)=&\frac{1}{s+\lambda\bar{c}}\left(1-\lambda\mu\bar{c}\left[\frac{s(1-\theta)+2\lambda\bar{c}}{(s+2\lambda\bar{c})(\mu\bar{c}(1+a)-s)}\rho(\mu)+\frac{2\theta s}{(s+2\lambda\bar{c})(2\mu\bar{c}(1+a)-s)}\rho(2\mu)\right]\right).
    \end{array}
\end{displaymath}
Note that \eqref{polo} has the same form as the one in \cite[eq. (2.1)]{boxruin}. Iteration of \eqref{polo} results in the following theorem.
\begin{theorem}
    The Laplace transform of the ruin probability $\rho(s)$ is given by
    \begin{equation}
        \rho(s)=\sum_{k=0}^{\infty}\prod_{j=0}^{k-1}J(\zeta^{(j)}(s);\theta)H(\zeta^{(k)}(s);\theta),\label{cvax}
    \end{equation}
    where $\zeta^{(j)}(s):=\zeta(\zeta^{(j-1)}(s))=\frac{s}{(\bar{c}(1+a))^{j}}$, with $\zeta^{(0)}(s)=s$. The unknowns $\rho(\mu)$, $\rho(2\mu)$ (that appear with a prefactor in all $H(\zeta^{(k)}(s);\theta)$) are derived by the system of equations that is constructed by substituting $s=\mu$, $s=2\mu$ in \eqref{cvax}.
\end{theorem}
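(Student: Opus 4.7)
The plan is to iterate the functional equation \eqref{polo} in the same spirit as was done for \eqref{eqo} and \eqref{polp1}, and then pass to the limit. First, I substitute $\rho(\zeta(s))=J(\zeta(s);\theta)\rho(\zeta^{(2)}(s))+H(\zeta(s);\theta)$ back into \eqref{polo}, and after $N-1$ iterations arrive at
\begin{equation*}
\rho(s)=\sum_{k=0}^{N-1}\prod_{j=0}^{k-1}J(\zeta^{(j)}(s);\theta)\,H(\zeta^{(k)}(s);\theta)+\rho(\zeta^{(N)}(s))\prod_{j=0}^{N-1}J(\zeta^{(j)}(s);\theta),
\end{equation*}
with the convention that an empty product equals $1$. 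This is a straightforward induction on $N$ using \eqref{polo}.

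The core of the argument is then to show that the remainder term vanishes as $N\to\infty$ and that the series converges. Under the natural stability assumption $\bar{c}(1+a)>1$, the map $\zeta(s)=s/(\bar{c}(1+a))$ is a contraction, so $\zeta^{(N)}(s)\to 0$ and hence $\rho(\zeta^{(N)}(s))\to\rho(0)$, which is finite since $R(x)\leq 1$. Direct evaluation of $J(s;\theta)$ at $s=0$ gives $J(0;\theta)=1/(\bar{c}(1+a))<1$, so by continuity the factors $J(\zeta^{(j)}(s);\theta)$ are eventually bounded by some $r<1$ along the orbit, and the product $\prod_{j=0}^{N-1}J(\zeta^{(j)}(s);\theta)$ decays geometrically to zero; simultaneously, $H(\zeta^{(k)}(s);\theta)$ tends to a finite constant. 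Consequently the tail of the sum in \eqref{cvax} is dominated by a convergent geometric series, so the limit exists and equals the stated expression.

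The boundary values are then handled as follows. Observing that only $\rho(\mu)$ and $\rho(2\mu)$ appear as unknowns inside each $H(\zeta^{(k)}(s);\theta)$ (through the prefactors in the definition of $H(s;\theta)$), substitution of $s=\mu$ and $s=2\mu$ into \eqref{cvax} yields a $2\times 2$ linear system in $\rho(\mu),\rho(2\mu)$, which closes the formula. The whole scheme is a direct parallel to the arguments leading to \eqref{gahy} and \eqref{gahy1}, only with the contraction constant $1/(1+a)$ replaced by $1/(\bar{c}(1+a))$.

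The main obstacle is the convergence analysis near the poles of $J$ and $H$ at $s=\mu\bar{c}(1+a)$ and $s=2\mu\bar{c}(1+a)$: one must check that these singularities are removable along the iterated orbit (as was the case in \cite[Remark 2.2]{boxruin}), so that the uniform bound needed to conclude geometric decay of the product indeed holds. A secondary but necessary point is to verify that the resulting $2\times 2$ system for $\rho(\mu),\rho(2\mu)$ is non-singular, so that the two constants, and hence $\rho(s)$, are uniquely determined.
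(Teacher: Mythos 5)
Your proposal is correct and follows essentially the same route as the paper: iterate the functional equation \eqref{polo} $N-1$ times, kill the remainder term using that $\zeta$ is a contraction and that $J(\zeta^{(j)}(s);\theta)\to J(0;\theta)=1/(\bar{c}(1+a))<1$ so the product decays geometrically, and then determine $\rho(\mu)$, $\rho(2\mu)$ from the $2\times 2$ system obtained by setting $s=\mu$ and $s=2\mu$ in \eqref{cvax}. In fact your write-up is more explicit than the paper's one-line justification, since you flag the implicit stability condition $\bar{c}(1+a)>1$, the removability of the singularities at $s=\mu\bar{c}(1+a)$ and $s=2\mu\bar{c}(1+a)$, and the non-singularity of the linear system, none of which the paper verifies.
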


The next step is to focus on the LST of the ruin time $\tau_{x}$, denoting by $T(s,x)$:
\begin{displaymath}\begin{array}{rl}
     T(s,x)=&e^{-sx}e^{-\lambda(1-c)x}+\int_{t=cx}^{x}e^{-st}\int_{y=0}^{\infty}T(s,(1+a)(x-t)+y)f_{B,C}(t-cx,y)dydt\vspace{2mm}\\
     =& e^{-x(s+\lambda\bar{c})}+\int_{w=0}^{\bar{c}x}e^{-s(x-w)}\int_{y=0}^{\infty}T(s,(1+a)w+y)f_{B,C}(\bar{c}x-w,y)dydw.
\end{array}
\end{displaymath}
Let $\tau(s,\beta)$ the Laplace transform of the ruin time LST $T(s,x)$. Then,
\begin{displaymath}
    \begin{array}{rl}
        \tau(s,\beta)= &\frac{1}{s+\lambda\bar{c}+\beta}+\int_{x=0}^{\infty}e^{-s\beta}\int_{w=0}^{\bar{c}x}e^{-s(x-w)}\int_{z=(1+a)w}^{\infty}R(z)f_{B,C}(\bar{c}x-w,z-w(1+a))dzdwdx.  \\
         & 
    \end{array}
\end{displaymath}
By repeating similar steps as above, we obtain after some algebra the following functional equation:
\begin{equation}
    \tau(s,\beta)=\tilde{J}(s,\beta;\theta)\tau(s,\psi(s,\beta))+\tilde{H}(s,\beta;\theta),\label{polo1}
\end{equation}
where now, $\psi(s,\beta):=\frac{r(s,\beta)}{1+a}=\frac{\beta+cs}{\bar{c}(1+a)}$, and,
\begin{displaymath}
    \begin{array}{rl}
         \tilde{J}(s,\beta;\theta)=&\frac{\lambda\mu[(s+2\lambda\bar{c}+\beta)(2\mu(1+a)-r(s,\beta))-\theta (s+\beta)r(s,\beta)]}{(s+2\lambda\bar{c}+\beta)(2\mu(1+a)-r(s,\beta))(s+\lambda\bar{c}+\beta)(\mu(1+a)-r(s,\beta))},\vspace{2mm}  \\
         \tilde{H}(s,\beta;\theta)=&\frac{1}{s+\lambda\bar{c}+\beta}\left(1-\lambda\mu\left[\frac{(s+\beta)(1-\theta)+2\lambda\bar{c}}{(s+2\lambda\bar{c}+\beta)(\mu(1+a)-r(s,\beta))}\tau(s,\mu)+\frac{2\theta (s+\beta)}{(s+2\lambda\bar{c}+\beta)(2\mu(1+a)-r(s,\beta))}\tau(s,2\mu)\right]\right).
    \end{array}
\end{displaymath}
Writing $\psi^{(j)}(s,\beta):=\psi(\psi^{(j-1)}(s,\beta))$ with $\psi^{(0)}(s,\beta):=\beta$, we have,
\begin{displaymath}
    \psi^{(j)}(s,\beta)=\frac{\beta}{(\bar{c}(1+a))^{j}}+\sum_{i=1}^{j}\frac{cs}{(\bar{c}(1+a))^{i}},\,j=1,2,\ldots.
\end{displaymath}
Iterating \eqref{polo1} we obtain the following result:
\begin{theorem}
    The Laplace transform of the ruin time LST $T(s,x)$ is given by
    \begin{equation}
        \tau(s,\beta)=\sum_{k=0}^{\infty}\prod_{j=0}^{k-1}\tilde{J}(\psi^{(j)}(s,\beta);\theta)\tilde{H}(\psi^{(k)}(s,\beta);\theta).\label{cqvax}
    \end{equation}
     The unknowns $\tau(s,\mu)$, $\tau(s,2\mu)$ (that appear with a prefactor in all $\tilde{H}(\psi^{(k)}(s,\beta;\theta)$) are derived by the system of equations that is constructed by substituting $s=\mu$, $s=2\mu$ in \eqref{cqvax}.
\end{theorem}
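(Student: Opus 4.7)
The plan is to establish \eqref{cqvax} by direct iteration of the one-step functional equation \eqref{polo1}, in complete analogy with the derivation of \eqref{cvax} for the ruin probability given just above. Since the structure of \eqref{polo1} is identical to \eqref{polo} with the variable $\beta$ playing the role that $s$ played before (while $s$ stays frozen as a parameter throughout), the argument is essentially the same, and the only genuinely new ingredient is the behaviour of the iterated map $\psi^{(j)}(s,\beta)$.

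First I would iterate \eqref{polo1} exactly $N-1$ times, keeping $s$ fixed and replacing $\beta$ by $\psi(s,\beta)$ at each step. This produces
\begin{equation*}
\tau(s,\beta)=\sum_{k=0}^{N-1}\prod_{j=0}^{k-1}\tilde{J}(\psi^{(j)}(s,\beta);\theta)\,\tilde{H}(\psi^{(k)}(s,\beta);\theta)+\tau(s,\psi^{(N)}(s,\beta))\prod_{j=0}^{N-1}\tilde{J}(\psi^{(j)}(s,\beta);\theta),
\end{equation*}
where the notation $\tilde{J}(\psi^{(j)}(s,\beta);\theta)$ and $\tilde{H}(\psi^{(j)}(s,\beta);\theta)$ abbreviates $\tilde{J}(s,\psi^{(j)}(s,\beta);\theta)$ and $\tilde{H}(s,\psi^{(j)}(s,\beta);\theta)$. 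Next I would use the closed-form expression $\psi^{(j)}(s,\beta)=\beta/(\bar{c}(1+a))^{j}+\sum_{i=1}^{j}cs/(\bar{c}(1+a))^{i}$ to observe that, in the regime $\bar{c}(1+a)>1$ where the model is non-degenerate, $\psi^{(j)}(s,\beta)$ converges as $j\to\infty$ to the finite limit $\psi^{(\infty)}(s)=cs/(\bar{c}(1+a)-1)$. Consequently $\tilde{H}(\psi^{(k)}(s,\beta);\theta)$ tends to a finite constant, and the limiting value of $\tilde{J}(\psi^{(j)}(s,\beta);\theta)$ can be computed explicitly and, as in the computation carried out after \eqref{eqo} and repeated for \eqref{polo}, shown to be strictly less than one in modulus.

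From the strict bound $|\tilde{J}(\psi^{(j)}(s,\beta);\theta)|<1$ for $j$ large, the partial products decay geometrically, so that the remainder $\tau(s,\psi^{(N)}(s,\beta))\prod_{j=0}^{N-1}\tilde{J}(\psi^{(j)}(s,\beta);\theta)$ vanishes as $N\to\infty$ because $\tau(s,\cdot)$, being a Laplace transform in the second argument of a LST taking values in $[0,1]$, is uniformly bounded on the right half-plane. The same bound simultaneously yields absolute convergence of the series in \eqref{cqvax}, and passing to the limit $N\to\infty$ in the iterated identity gives \eqref{cqvax}. Finally, substituting $\beta=\mu$ and $\beta=2\mu$ into \eqref{cqvax} (the statement's ``$s=\mu$, $s=2\mu$'' should be read in this sense, since $\tau(s,\mu)$ and $\tau(s,2\mu)$ appear inside $\tilde{H}$ with $\beta$ as the free variable) produces a $2\times 2$ linear system whose coefficients are entire in $s$ and whose solution determines the two unknowns as functions of $s$.

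The main obstacle is the convergence analysis of the product $\prod_{j=0}^{N-1}\tilde{J}(\psi^{(j)}(s,\beta);\theta)$. Unlike the $\rho$-case where $\zeta^{(j)}(s)\to 0$ and the limiting value of $J$ is the clean ratio $\phi(0)/(1+a)$, here $\psi^{(j)}(s,\beta)$ does not tend to $0$ but to $\psi^{(\infty)}(s)$, so the limiting value of $\tilde{J}$ depends on $s$ and must be checked, via the explicit rational expression for $\tilde{J}$, to lie strictly inside the unit disk; one also needs to verify that $\tau(s,\psi^{(N)}(s,\beta))$ remains bounded uniformly in $N$, which follows from the probabilistic interpretation of $\tau$ but should be noted explicitly.
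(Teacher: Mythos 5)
Your proposal is correct and follows essentially the same route as the paper, which simply iterates \eqref{polo1} $N-1$ times and passes to the limit, invoking the geometric decay of the products of $\tilde{J}$ to kill the remainder term and to justify convergence of the series. You are in fact more careful than the paper on the one subtle point — that $\psi^{(j)}(s,\beta)$ converges to $cs/(\bar{c}(1+a)-1)$ rather than to $0$, so the limiting value of $\tilde{J}$ is $s$-dependent and must be checked to lie strictly inside the unit disk — and your reading of the final substitution as $\beta=\mu$, $\beta=2\mu$ is the correct interpretation of the statement.
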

Note that the sums of products in \eqref{cvax}, \eqref{cqvax} converge since both $J(.,.)$, $H(.,.)$ (resp. $\tilde{J}(.,.)$, $\tilde{H}(.,.)$) decrease geometrically fast.
\begin{remark}
    Note that for $c=0$, $a=0$, we cope with the standard (no proportional gains) dual risk model with $\exp(\mu)$ initial capital, and for which there is a dependence among gain interarrivals and gain sizes based on the FGM copula. In queueing terms, we are dealing with an M/M/1 queue and studying the busy period starting from an empty system with an $\exp(\mu)$ upward jump, in which the interarrival times and work that enters the system are dependent based on the FGM copula. Note that $\mu\tau(s,\mu)=\int_{0}^{\infty}T(s,x)\mu e^{-\mu x}dx$ should be equal to the to the LST of an M/M/1 queue with initial service time to be $\exp(\mu)$, and for which we consider the dependence structure mentioned above. In particular, for $c=a=0$, \eqref{polo1} is rewritten as:
    \begin{equation}
    \tau(s,\beta)(1-\widehat{J}(s,\beta;\theta))=\widehat{H}(s,\beta;\theta),\label{polo11}
\end{equation}
where
\begin{displaymath}
    \begin{array}{rl}
       \widehat{J}(s,\beta;\theta)=  &\lambda\mu\frac{(s+2\lambda+\beta)(2\mu-\beta)-\theta(s+\beta)\beta}{(s+\lambda+\beta)(s+2\lambda+\beta)(\mu-\beta)(2\mu-\beta)}, \vspace{2mm} \\
       \widehat{H}(s,\beta;\theta)=  &\frac{1}{s+\lambda+\beta}\left(1-\lambda\mu\left[\frac{(s+\beta)(1-\theta)+2\lambda}{(s+2\lambda+\beta)(\mu-\beta)}\tau(s,\mu)+\frac{2\theta (s+\beta)}{(s+2\lambda+\beta)(2\mu-\beta)}\tau(s,2\mu)\right]\right).
    \end{array}
\end{displaymath}
It is readily seen that for $\theta=0$, \eqref{polo11} reduces to the one in \cite[eq. (29)]{boxman}. Moreover, taking $\beta=\mu$, and $\beta=2\mu$, \eqref{polo11} gives an identity. We now investigate the roots of the equation $1-\widehat{J}(s,\beta;\theta)=0$.
\begin{proposition}
    For $Re(s)>0$, $\theta\neq 0$, the equation $1-\widehat{J}(s,\beta;\theta)=0$ has exactly two roots, say $u_{1}(s),u_{2}(s)$ with $Re(u_{i}(s))>0$, $i=1,2$.
\end{proposition}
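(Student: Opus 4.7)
The plan is to apply Rouch\'e's theorem in the variable $\beta$ on the contour $C_r$ consisting of the imaginary axis from $-ir$ to $ir$ closed by the semicircle of radius $r$ in the right half-plane of $\beta$, and then let $r\to\infty$. Clearing denominators in $1-\widehat{J}(s,\beta;\theta)=0$ produces the polynomial equation
\begin{displaymath}
(s+\lambda+\beta)(s+2\lambda+\beta)(\mu-\beta)(2\mu-\beta)=\lambda\mu\left[(s+2\lambda+\beta)(2\mu-\beta)-\theta(s+\beta)\beta\right],
\end{displaymath}
which is of degree four in $\beta$ whenever $\theta\neq 0$. I would take $f(\beta)$ to be the left-hand side, whose zeros $-(s+\lambda),-(s+2\lambda),\mu,2\mu$ produce exactly two with $Re(\beta)>0$ under $Re(s)>0$, namely $\mu$ and $2\mu$.

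The Rouch\'e step is then to verify $|\widehat{J}(s,\beta;\theta)|<1$ on $C_r$. On the large semicircle this is immediate because the denominator of $\widehat{J}$ (in $\beta$) has degree four while the numerator has degree two, so $|\widehat{J}|\to 0$ uniformly as $r\to\infty$. On the imaginary axis $\beta=it$ I would use the decomposition
\begin{displaymath}
\widehat{J}(s,it;\theta)=\frac{\lambda\mu}{(s+\lambda+it)(\mu-it)}-\theta\,\frac{\lambda\mu\,(s+it)(it)}{(s+\lambda+it)(s+2\lambda+it)(\mu-it)(2\mu-it)}.
\end{displaymath}
The first summand has modulus at most $\lambda/(\lambda+Re(s))<1$ uniformly in $t$. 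The second summand is a correction: it carries $|\theta|<1$ and the factor $\beta=it$ that vanishes at $t=0$, while the remaining ratio is controlled by the elementary inequalities $|s+it|\leq|s+2\lambda+it|$ and $|it|\leq|2\mu-it|$. This mirrors the mechanism used in the proof of Proposition \ref{propo}, where the analogous correction $d(s)$ satisfied $d(0)=0$ and ensured the bound stayed strictly below one.

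Once $|\widehat{J}(s,\beta;\theta)|<1$ is established on $C_r$, Rouch\'e's theorem yields that $1-\widehat{J}(s,\beta;\theta)=0$ and $f(\beta)=0$ share the same number of zeros in the right half-plane, namely two, which I denote $u_1(s),u_2(s)$. The main obstacle is making the bound on the imaginary axis strictly less than one when $Re(s)$ is very small and $|\theta|$ is close to one: in that regime both summands above approach their crude upper bounds simultaneously, so a direct triangle inequality fails to close the argument. Overcoming this requires exploiting the precise cancellation between the two summands; concretely, I would expand $|\widehat{J}(s,it;\theta)|^2$ as an explicit rational function of $t$ with coefficients depending on $Re(s),Im(s),\lambda,\mu,\theta$ and verify that its denominator strictly dominates its numerator for all real $t$ whenever $Re(s)>0$, using the fact that both the principal term and the $\theta$-correction vanish in the correct order at $t=0$ and at $|t|\to\infty$.
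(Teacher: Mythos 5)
Your proposal follows essentially the same route as the paper: Rouch\'e's theorem on the same contour $C_r$ in the $\beta$-plane, the same comparison polynomial $(s+\lambda+\beta)(s+2\lambda+\beta)(\mu-\beta)(2\mu-\beta)$ with its two right-half-plane zeros $\mu$ and $2\mu$, the same degree-count argument on the large semicircle, and the same splitting of $\widehat{J}$ into a principal term plus a $\theta$-correction on the imaginary axis. The one substantive difference is that you explicitly flag the imaginary-axis estimate as incomplete, and you are right to do so: bounding the two summands separately by the triangle inequality gives at best $\lambda(1+|\theta|)/(\lambda+\mathrm{Re}(s))$, which is not below one when $|\theta|$ is near $1$ and $\mathrm{Re}(s)$ is small. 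This is precisely the step where the paper's own argument is weakest --- its displayed chain of inequalities ends with the assertion that the sum is at most $|s/(s+\lambda)|<1$, which does not follow from the preceding line. So your diagnosis is accurate rather than a defect relative to the paper; however, your proposed remedy (expanding $|\widehat{J}(s,it;\theta)|^{2}$ as a rational function of $t$ and checking that the denominator dominates) is only a plan, not a proof, and until that computation is carried out (or the cancellation between the two summands is exploited in some other way, e.g.\ by combining them over a common denominator before estimating) the key hypothesis of Rouch\'e's theorem remains unverified in the regime you identify. One small additional point: the cleared-denominator equation is quartic in $\beta$ for every $\theta$, including $\theta=0$; the role of $\theta\neq 0$ is rather that for $\theta=0$ the factors $(s+2\lambda+\beta)(2\mu-\beta)$ cancel and the root $\beta=2\mu$ becomes spurious, leaving only one genuine root.
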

\begin{proof}
    Following Rouch\'e's theorem \cite{tit}, we consider a contour $C_r$, consisting of the imaginary axis running from $-ir$ to $ir$ and a semi-circle with radius $r$ running clockwise from $ir$ to $-ir$. Let $r\to\infty$ and denote by $C$ the limiting contour. Note that $1-\widehat{J}(s,\beta;\theta)=0$ is rewritten as 
   \begin{equation}
      \lambda\mu[(s+2\lambda+\beta)(2\mu-\beta)-\theta(s+\beta)\beta]=(s+\lambda+\beta)(s+2\lambda+\beta)(\mu-\beta)(2\mu-\beta).
       \label{gt}
    \end{equation}
    So it suffices to show that \eqref{gt} has exactly two roots in  the  right-half  complex  plane. 
    It is readily seen that $|\widehat{J}(s,\beta;\theta)|\to 0$ on $C$ (excluding $Re(\beta)=0$), since it is the sum of ratios of polynomials with a strictly higher degree at the denominator. For $Re(\beta)=0$,
    \begin{displaymath}
        \begin{array}{rl}
            |\widehat{J}(s,\beta;\theta)|= &\lambda\mu|\frac{(s+2\lambda+\beta)(2\mu-\beta)-\theta(s+\beta)\beta}{(s+\lambda+\beta)(s+2\lambda+\beta)(\mu-\beta)(2\mu-\beta)}|\leq \lambda\mu|\frac{1}{(s+\lambda+\beta)(\mu-\beta)}|+\lambda\mu|\theta|\frac{(s+\beta)\beta}{(s+\lambda+\beta)(s+2\lambda+\beta)(\mu-\beta)(2\mu-\beta)}|\\&\leq |\frac{s}{s+\lambda}|<1.
        \end{array}
    \end{displaymath}
    Thus in any case $|\widehat{J}(s,\beta;\theta)|<1$ or equivalently
    \begin{displaymath}
        \lambda\mu|(s+2\lambda+\beta)(2\mu-\beta)-\theta(s+\beta)\beta|<|(s+\lambda+\beta)(s+2\lambda+\beta)(\mu-\beta)(2\mu-\beta)|,
    \end{displaymath}
    and thus by Rouch\'e's theorem, it follows that \eqref{gt} has the same number of roots as  $(s+\lambda+\beta)(s+2\lambda+\beta)(\mu-\beta)(2\mu-\beta)=0$  inside $C_r$. Since  the  latter equation  has  exactly  two  positive  roots  inside $C_r$,  we  deduce  that \eqref{gt}, or equivalently $1-\widehat{J}(s,\beta;\theta)=0$ has exactly two roots, say $u_{1}(s),u_{2}(s)$ with positive real parts. Finally, we complete the proof by letting $r\to\infty$.
\end{proof}

Substituting in \eqref{polo11} $\beta=u_{1}(s)$, and $\beta=u_{2}(s)$ we construct a system of two equations with unknowns $\tau(s,\mu)$, $\tau(s,2\mu)$. By solving this system we are able to compute $\mu\tau(s,\mu)$ in terms of the $u_{1}(s),u_{2}(s)$. Note that by setting $\theta=0$ (i.e., the standard M/M/1 queue) $\mu\tau(s,\mu)$ coincides with the expression in \cite[eq. (29)]{boxman}.    
\end{remark}

\section{A dual risk model with upward and downward jumps and randomly proportional gains}\label{random}
We consider a dual risk model with constant expense rate. We focused on the case of two-sided jumps, i.e., the
upward and downward jumps can be interpreted as company random gains and random losses respectively. Thus, our model is suitable for
insurance companies with business in both property and casualty insurance and life
annuities. More precisely, we assume that with probability $p$ gains $C_i$, and with probability $q$ losses $-D_{i}$ ($i = 1, 2,\ldots $) that arrive according to a renewal process with general
interarrival times. Let, $C_{i}$s and $D_{i}$s are exponentially distributed (we also consider the case where they follow Erlang distributions). 

On top of that, we add the randomly proportional gain feature. More precisely, if the surplus process just before the $i$th arrival is at level $u$, then, the capital jumps to:
\begin{equation}
    \left\{\begin{array}{ll}
         (1+a_{l})u+C_{i},&\text{with probability }p\times k_{l},\,l=1,\ldots,K,  \\
         \left[(1+\beta_{h})u-D_{i}\right]^{+},& \text{with probability }q\times m_{h},\,h=1,\ldots,L,
    \end{array}\right.
\end{equation}
where $p+q=1$, and $\sum_{l=1}^{L}=1$, $\sum_{h=1}^{M}m_{h}=1$. Thus, with probability $p$ (resp. $q$) we have an upward (resp. downward) jump of size $C_{i}$ (resp. $-D_{i}$), with an additional inflow proportional to $u$, and equal to $a_{l}u$ (resp. $\beta_{h}u$) with probability $k_{l}$, $l=1,\ldots,K$ (resp. $m_{h}$, $h=1,\ldots,L$). So the type of the jump affects also the values of proportionality coefficient. Note that for $p=1$, $k_{1}=1$, our model reduces to the one in \cite{boxruin}. Then, having in mind that $[(1+\beta_{h})u-D_{i}]^{+}=\left\{\begin{array}{ll}
     (1+\beta_{h})u-D_{i},&(1+\beta_{h})u\geq D_{i},  \\
     0,&(1+\beta_{h})u<D_{i}, 
\end{array}\right.$ we have,
\begin{equation}
    \begin{array}{rl}
         R(x)=&1-B(x)+p\sum_{l=1}^{K}k_{l}\int_{t=0}^{x}\int_{y=0}^{\infty}R((1+a_{l})(x-t)+y)\mu e^{-\mu y}dydB(t)\vspace{2mm}\\
         &+q\sum_{h=1}^{K}m_{h}\int_{t=0}^{x}\left\{\int_{y=0}^{(1+\beta_{h})(x-t)}R((1+\beta_{h})(x-t)-y)\nu e^{-\nu y}dy+\int_{y=(1+\beta_{h})(x-t)}^{\infty}\nu e^{-\nu y}dy\right\}dB(t)
    \end{array}\label{eq1c}
\end{equation}
Applying Laplace transforms, we come up with the following functional equation,
\begin{equation}
    \rho(s)=\frac{1-\phi(s)}{s}+\sum_{l=1}^{K}p_{l}I_{l}(s)+\sum_{h=1}^{L}q_{h}\widehat{I}_{h}(s),\label{bnm}
\end{equation}
where $p_{l}=p\times k_{l}$, $l=1,\ldots,K$, and $q_{h}=q\times m_{h}$, $h=1,\ldots,L$. After lengthy, but straightforward computations,
\begin{equation}
    \begin{array}{rl}
        I_{l}(s)= &\frac{\phi(s)\mu}{s-\mu(1+a_{l})}(\rho(\mu)-\rho(\frac{s}{1+a_{l}})),\,l=1,\ldots,K,\vspace{2mm}  \\
         \widehat{I}_{h}(s)=&\frac{\phi(s)\nu}{s+\nu(1+\beta_{h})}(\rho(\frac{s}{1+\beta_{h}})+\frac{1}{\nu}),\,h=1,\ldots,L. 
    \end{array}\label{bvz}
\end{equation}
Substituting \eqref{bvz} in \eqref{bnm} we come up with the following functional equation:
\begin{equation}
    \rho(s)=\phi(s)\sum_{l=1}^{K+L}g_{l}f_{l}(\sigma_{l}(s))\rho(\zeta_{l}(s))+L(s),\label{bhy}
\end{equation}
where,
\begin{displaymath}
    g_{l}=\left\{\begin{array}{ll}
        \frac{p_{l}}{1+a_{l}}, &\,l=1,\ldots,K,  \\
       \frac{q_{l-K}}{1+\beta_{l-K}}, &\,l=K+1,\ldots,K+L,
    \end{array}\right.
\end{displaymath}
\begin{displaymath}
    f_{l}(s)=\left\{\begin{array}{ll}
         \frac{\mu}{\mu+s},&l=1,\ldots,K,  \\
         \frac{\nu}{\nu+s},&l=K+1,\ldots,K+L, 
    \end{array}\right.
\end{displaymath}
\begin{displaymath}
    \zeta_{l}(s)=\left\{\begin{array}{ll}
         a_{l}(s)=\frac{s}{1+a_{l}},&l=1,\ldots,K,  \\
         \widehat{a}_{l-K}(s)=\frac{s}{1+\beta_{l-K}},&l=K+1,\ldots,K+L, 
    \end{array}\right.
\end{displaymath}
\begin{displaymath}
    \sigma_{l}(s)=\left\{\begin{array}{ll}
         -a_{l}(s),&l=1,\ldots,K,  \\
         \widehat{a}_{l-K}(s),&l=K+1,\ldots,K+L, 
    \end{array}\right.
\end{displaymath}
and
\begin{displaymath}
    L(s)=\frac{1-\phi(s)}{s}+\phi(s)\sum_{l=1}^{K+L}\widehat{g}_{l}f_{l}(\sigma_{l}(s)),
\end{displaymath}
with $\widehat{g}_{l}=\left\{\begin{array}{ll}
         -g_{l}\rho(\mu),&l=1,\ldots,K,  \\
         \frac{g_{l}}{\nu},&l=K+1,\ldots,K+L.
    \end{array}\right.$

    Our aim now is to solve \eqref{bhy}. Note that the form of \eqref{bhy} is similar to \cite[eq. (2)]{adan}, and thus, a similar approach can be employed to solve it. After $N-1$ iterations, 
    \begin{displaymath}
    \begin{array}{rl}
        \rho(s)=& \sum_{k=0}^{N-1}\sum_{i_{1}+\ldots+i_{K+L}=k}g_{1}^{i_{1}}\ldots g_{K+L}^{i_{K+L}}G_{i_{1},\ldots,i_{K+L}}(s)L(\zeta_{i_{1},\ldots,i_{K+L}}(s))\vspace{2mm}   \\
         &+ \sum_{i_{1}+\ldots+i_{K+L}=k}g_{1}^{i_{1}}\ldots g_{K+L}^{i_{K+L}}G_{i_{1},\ldots,i_{K+L}}(s)\rho(\zeta_{i_{1},\ldots,i_{K+L}}(s)),
    \end{array}
    \end{displaymath}
    where $\zeta_{i_{1},\ldots,i_{K+L}}(s):=\zeta_{1}^{i_{1}}(\zeta_{2}^{i_{2}}(\ldots(\zeta_{K+L}^{i_{K+L}}(s))\ldots))$, and $\zeta_{k}^{m}(s)$ is the $m$th iterate of $\zeta_{k}(s)$, and the functions $G_{i_{1},\ldots,i_{K+L}}(s)$ are recursively obtained as follows:
    \begin{displaymath}
        G_{i_{1},\ldots,i_{K+L}}(s)=\sum_{k=1}^{K+L}G_{i_{1},\ldots,i_{k}-1,\ldots,i_{K+L}}(s)L_{0,\ldots,1,\ldots,0}(\zeta_{i_{1},\ldots,i_{k}-1,\ldots,i_{K+L}}(s)),
    \end{displaymath}
    with $G_{0,\ldots,0}(s)=1$, $G_{0,\ldots,1,\ldots,0}(s)=L_{0,\ldots,1,\ldots,0}(s)=\phi(s)f_{l}(\sigma_{l}(s))$, where 1 is in the $l$th position and $l=1,\ldots,K+L$, and $G_{i_{1},\ldots,i_{K+L}}(s)=0$ if one of the indices equals $-1$. Then, letting $N\to\infty$,
    \begin{equation}\begin{array}{rl}
    \rho(s)=&\sum_{k=0}^{\infty}\sum_{i_{1}+\ldots+i_{K+L}=k}g_{1}^{i_{1}}\ldots g_{K+L}^{i_{K+L}}G_{i_{1},\ldots,i_{K+L}}(s)L(\zeta_{i_{1},\ldots,i_{K+L}}(s))\vspace{2mm}\\&+ \lim_{N\to\infty}\sum_{i_{1}+\ldots+i_{K+L}=N}g_{1}^{i_{1}}\ldots g_{K+L}^{i_{K+L}}G_{i_{1},\ldots,i_{K+L}}(s), \end{array}\label{sollla}
\end{equation}
since $\rho(\zeta_{i_{1},\ldots,i_{K+L}}(s))\to\rho(0)=1$ since $\zeta_{l}(s)$, $l=1,\ldots,K+L$ are commutative contraction mappings on the closed positive half plane. It is readily seen that $G_{i_{1},\ldots,i_{K+L}}(s)$ can written as a finite sum of products. As the number of iterations increases, each of these products vanish. Moreover, as the number of iterations increases $L(\zeta_{i_{1},\ldots,i_{K+L}}(s))$ approaches some constant. Thus,
 \begin{equation}\begin{array}{rl}
    \rho(s)=&\sum_{k=0}^{\infty}\sum_{i_{1}+\ldots+i_{K+L}=k}g_{1}^{i_{1}}\ldots g_{K+L}^{i_{K+L}}G_{i_{1},\ldots,i_{K+L}}(s)L(\zeta_{i_{1},\ldots,i_{K+L}}(s)).\end{array}\label{solllta}
\end{equation}
We still need to derive $\rho(\mu)$. This can be achieved by substituting $s=\mu$ in \eqref{solllta}.
    \begin{remark}
        Consider the simpler case where $K=1=L$, and $a_{1}=\beta_{1}\equiv a$, so that \begin{equation}
    \left\{\begin{array}{ll}
         (1+a)u+C_{i},&\text{with probability }p,  \\
         \left[(1+a)u-D_{i}\right]^{+},& \text{with probability }q.
    \end{array}\right.\label{cvg}
\end{equation}
Then, \eqref{bhy} reduces to
\begin{equation}
    \rho(s)=\phi(s)[p\frac{\mu}{\mu(1+a)-s}+q\frac{\nu}{\nu(1+a)+s}]\rho(\frac{s}{1+a})+\frac{1-\phi(s)}{s}+\phi(s)[\frac{q}{\nu(1+a)+s}-\frac{p\mu}{\mu(1+a)-s}\rho(\mu)].\label{ghy}    
\end{equation}
Note that for $q=0$ (so that $p=1$) we recover the model in \cite{boxruin}. Equation \eqref{ghy} has the same form as in \cite[eq. (2.5)]{boxruin}, where now
\begin{displaymath}
    \begin{array}{rl}
        J(s)= & p\frac{\mu}{\mu(1+a)-s}+q\frac{\nu}{\nu(1+a)+s}, \\
         H(s)=& \frac{1-\phi(s)}{s}+\phi(s)[\frac{q}{\nu(1+a)+s}-\frac{p\mu}{\mu(1+a)-s}\rho(\mu)],
    \end{array}
\end{displaymath}
and its solution is as given in \cite[eq. (2.7)]{boxruin}, with $J(.)$, $H(.)$, as given above. 

In case where $a_{1}\equiv a\neq \beta_{1}\equiv \beta$, i.e., when \eqref{cvg} is given by,
\begin{equation}
    \left\{\begin{array}{ll}
         (1+a)u+C_{i},&\text{with probability }p,  \\
         \left[(1+\beta)u-D_{i}\right]^{+},& \text{with probability }q,
    \end{array}\right.\label{cvg1}
\end{equation}
and we now have to solve
\begin{equation}
    \rho(s)=\phi(s)J_{0}(s)\rho(\frac{s}{1+a})+\phi(s)J_{1}(s)\rho(\frac{s}{1+\beta})+H_{1}(s),\label{cxz}
\end{equation}
where,
\begin{displaymath}
    \begin{array}{rl}
        J_{0}(s)= & p\frac{\mu}{\mu(1+a)-s}\\
        J_{1}(s)=&q\frac{\nu}{\nu(1+\beta)+s}, \\
         H_{1}(s)=& \frac{1-\phi(s)}{s}+\phi(s)[\frac{q}{\nu(1+\beta)+s}-\frac{p\mu}{\mu(1+a)-s}\rho(\mu)].
    \end{array}
\end{displaymath}
The form of \eqref{cxz} is the same as the one in \eqref{eq3}, and thus, its solution can be derived similarly. 

Clearly, the dual risk model with proportional gains and two-sided jumps (i.e., either an upward or a downward jump) can be analysed similarly, when we consider the FGM copula to describe the dependence among the gain interarrival time and the corresponding size. Indeed, consider the simpler case where $C_{i}\sim \exp(\mu)$, $D_{i}\sim\exp(\nu)$ (the analysis is still applicable when we consider hyperexponential, or Erlang distributions), but there is a dependence with the ``gain" interarrival time based on the FGM copula, i.e., the joint density function of the random vectors $(B_{i},C_{i})$, $(B_{i},D_{i})$ are as follows: 
\begin{displaymath}
    \begin{array}{rl}
       f_{B,C}(t,y) = &f_{B}(t)\mu e^{-\mu y}+\theta_{1}h(t)(2\mu e^{-2\mu y}-\mu e^{-\mu y}),\,\theta_{1}\in[-1,1],  \\
        f_{B,D}(t,y) = &f_{B}(t)\nu e^{-\nu y}+\theta_{2}h(t)(2\nu e^{-2\nu y}-\nu e^{-\nu y}),\,\theta_{2}\in[-1,1], 
    \end{array}
\end{displaymath}
where $h(t)=f_{B}(t)(1-2F_{B}(t))$; see also Section \ref{copu}. Note that asking $\theta_{1}=0$ and/or $\theta_{2}=0$, we come up with the independent case.

Then, we have
\begin{equation}
    \begin{array}{rl}
         R(x)=&1-B(x)+p\int_{t=0}^{x}\int_{y=0}^{\infty}R((1+a)(x-t)+y)f_{B,C}(t,y)dydt\vspace{2mm}\\
         &+q\int_{t=0}^{x}\int_{y=0}^{(1+\beta)(x-t)}R((1+\beta)(x-t)-y) f_{B,D}(t,y)dydt+q\int_{y=(1+\beta)(x-t)}^{\infty}f_{B,C}(t,y)dydt,
    \end{array}\label{eq1d}
\end{equation}
Taking Laplace transforms, we come up with the functional equation
\begin{equation}
    \rho(s)=pJ_{1}(s;\theta_{1})\rho(\frac{s}{1+a})+qJ_{2}(s;\theta_{2})\rho(\frac{s}{1+\beta})+H(s;\theta_{1},\theta_{2}),\label{eqqq}
\end{equation}
where now,
\begin{displaymath}
    \begin{array}{rl}
       J_{1}(s;\theta_{1}):=  &\mu\frac{\phi(s)-\theta_{1}h^{*}(s)}{\mu(1+a)-s}+\frac{2\mu\theta_{1}h^{*}(s)}{2\mu(1+a)-s},  \vspace{2mm}\\
         J_{2}(s;\theta_{2}):=  &\nu\frac{\phi(s)-\theta_{2}h^{*}(s)}{\nu(1+\beta)+s}+\frac{2\nu\theta_{2}h^{*}(s)}{2\nu(1+\beta)+s}, \vspace{2mm} \\ 
         H(s;\theta_{1},\theta_{2}):=&\frac{1-\phi(s)}{s}+q[\frac{\phi(s)-\theta_{2}h^{*}(s)}{\nu(1+\beta)+s}+\frac{\theta_{2}h^{*}(s)}{2\nu(1+\beta)+s}]-p[\mu\rho(\mu)\frac{\phi(s)-\theta_{1}h^{*}(s)}{\mu(1+a)-s}+\rho(2\mu)\frac{2\mu\theta_{1}h^{*}(s)}{2\mu(1+a)-s}].
    \end{array}
\end{displaymath}
Note that \eqref{eq1d} has the same form as the one in \eqref{eq3}, so we can apply the same method to solve it. After $n-1$ ($n\geq 1$) iterations we have,
\begin{displaymath}
    \rho(s)=\sum_{k=0}^{n}\sum_{i_{1}+i_{2}=k}p^{i_{1}}q^{i_{2}}L_{i_{1},i_{2}}(s)H(f_{i_{1},i_{2}}(s);\theta_{1},\theta_{2})+\sum_{i_{1}+i_{2}=n}p^{i_{1}}q^{i_{2}}L_{i_{1},i_{2}}(s)\rho(f_{i_{1},i_{2}}(s)),
\end{displaymath}
where $L_{i_{1},i_{2}}(s)$ are recursively derived as in \eqref{assi}, and $f_{i_{1},i_{2}}(s)=a_{i_{1}}(b_{i_{2}}(s))=b_{i_{2}}(a_{i_{1}}(s))=\frac{s}{(1+a)^{i_{1}}(1+\beta)^{i_{2}}}$ with $a_{i}(s)=\frac{s}{(1+a)^{i}}$, $b_{j}(s)=\frac{s}{(1+\beta)^{j}}$. Note that $a_{i}(s)$, $b_{j}(s)$ are commutative contraction mappings on the right half plane, and the methodology presented in \cite[Section 2]{adan} applies. In particular,
\begin{equation}
    \rho(s)=\sum_{k=0}^{\infty}\sum_{i_{1}+i_{2}=k}p^{i_{1}}q^{i_{2}}L_{i_{1},i_{2}}(s)H(f_{i_{1},i_{2}}(s);\theta_{1},\theta_{2})+\lim_{n\to\infty}\sum_{i_{1}+i_{2}=n}p^{i_{1}}q^{i_{2}}L_{i_{1},i_{2}}(s).\label{solll}
\end{equation}
We still need to obtain $\rho(\mu)$, $\rho(2\mu)$. This can be done by solving a system of two equations that are constructed by setting $s=\mu$, and $s=2\mu$ in \eqref{solll}.
    \end{remark}
    \section{The dual risk model with uniformly proportional gains}\label{rv}
    Consider now the case where the proportional parameter is a random variable, i.e., the surplus process $U(t)$ with $U(0)=x>0$ evolves as
    \begin{displaymath}
        U(t)=x-t+\sum_{i=1}^{N(t)}(C_{i}+V_{i}U(S_{i}^{-})),\,t\geq 0,
    \end{displaymath}
    where $V_{1},V_{2},\ldots$ are i.i.d. uniformly distributed random variables on $[a,b]$, $0<a<b$. Moreover, $N(t)$ denotes the number of gains that arrive in $(0,t]$, with $S_{i+1}-S_{i}$ are i.i.d. gain interarrival times having c.d.f. $B(.)$, density $b(.)$, and LST $\phi(.)$. By assuming the $C_{i}$s are i.i.d. exponentially distributed random variables with rate $\mu$, the ruin probability $R(x)$ when starting in $x$ satisfies the following equation:
    \begin{equation}
    \begin{array}{rl}
         R(x)=&1-B(x)+\int_{v=a}^{b}\int_{t=0}^{x}\int_{y=0}^{\infty}R((1+v)(x-t)+y)\mu e^{-\mu y}dB(t)\frac{dv}{b-a}.
    \end{array}\label{eq11a}
\end{equation}
Applying Laplace transform in \eqref{eq11a} yields
\begin{displaymath}
    \rho(s)=\frac{1-\phi(s)}{s}+I^{*}(s),
\end{displaymath}
where
\begin{displaymath}
    \begin{array}{rl}
         I^{*}(s)=&\frac{\phi(s)}{b-a}\int_{v=a}^{b}\int_{x=t}^{\infty}e^{-(x-t)(s-\mu(1+v))}\int_{z=(1+v)(x-t)}^{\infty}\mu e^{-\mu z}R(z)dzdxdv\vspace{2mm}  \\
         =&\frac{\phi(s)}{b-a} \int_{v=a}^{b}\int_{z=0}^{\infty}\mu e^{-\mu z}R(z)\frac{e^{-\frac{z}{1+v}(s-\mu(1+v))}-1}{\mu(1+v)-s}dzdv\vspace{2mm}  \\
         =&\frac{\phi(s)}{b-a} \int_{v=a}^{b}\frac{\mu}{\mu(1+v)-s}(\rho(\frac{s}{1+v})-\rho(\mu))dv\vspace{2mm}  \\
         =&\frac{\phi(s)}{b-a} \int_{v=a}^{b}\frac{\mu}{\mu(1+v)-s}\rho(\frac{s}{1+v})dv-\frac{\phi(s)}{b-a}\rho(\mu)\ln(\frac{(1+b)\mu-s}{(1+a)\mu-s}).
    \end{array}
\end{displaymath}
Thus, we come up with the following functional equation:
\begin{equation}
    \rho(s)=\int_{a}^{b}k(s,v_{1})\rho(\frac{s}{1+v_{1}})dv_{1}+L(s),\label{funn1}
\end{equation}
where,
\begin{displaymath}
    \begin{array}{rl}
         k(s,v):=& \frac{\phi(s)}{b-a}\frac{\mu}{\mu(1+v)-s}, \\
         L(s):=& \frac{1-\phi(s)}{s}-\frac{\phi(s)}{b-a}\rho(\mu)\ln(\frac{(1+b)\mu-s}{(1+a)\mu-s}).
    \end{array}
\end{displaymath}
Iterating $n$ times yields
\begin{equation}
    \begin{array}{rl}
         \rho(s)=&\int\ldots\int_{[a,b]^{n+1}}\prod_{j=1}^{n+1}k(\frac{s}{\prod_{i=1}^{j-1}(1+v_{i})},v_{j})\rho(\frac{s}{\prod_{m=1}^{n+1}(1+v_{m})})dv_1\ldots dv_{n+1}  \vspace{2mm}\\
         &+L(s)+\sum_{j=1}^{n} \int\ldots\int_{[a,b]^{j}}\prod_{i=1}^{j}k(\frac{s}{\prod_{m=1}^{i-1}(1+v_{m})},v_{i})L(\frac{s}{\prod_{m=1}^{j}(1+v_{m})})dv_1\ldots dv_{j},
    \end{array}\label{iui}
\end{equation}
with the convention that an empty product equals one. In the following, we will let $n$ tend to $\infty$ to obtain an expression for $\rho(s)$. Thus, we need to estimate the limit of the first term on the right-hand side of \eqref{iui}, as well as to verify the convergence of the summation in the third term. To cope with this task is the evaluation of
\begin{displaymath}
    |\int\ldots\int_{[a,b]^{n+1}}\prod_{j=1}^{n+1}k(\frac{s}{\prod_{i=1}^{j-1}(1+v_{i})},v_{j})dv_1\ldots dv_{n+1}  |.
\end{displaymath}
Note that $k(s,v)=\frac{1}{(1+v)(b-a)}\phi(s)A(-\frac{s}{1+v})=\frac{1}{(1+v)(b-a)}E(e^{-s B+\frac{s}{1+v} A})$, where $A(s):=E(e^{-sA})$, $A\sim\exp(\mu)$, $E(e^{-sB})=\phi(s)$. Thus,
\begin{displaymath}
    \begin{array}{l}
         |\int\ldots\int_{[a,b]^{n+1}}\prod_{j=1}^{n+1}\frac{1}{(1+v_{j})(b-a)}\phi(\frac{s}{\prod_{i=1}^{j-1}(1+v_{i})})A(-\frac{s}{1+v_{j}})dv_1\ldots dv_{n+1}|\vspace{2mm}\\  \leq |\int\ldots\int_{[a,b]^{n+1}}\prod_{j=1}^{n+1}\frac{1}{1+v_{j}}dv_1\ldots dv_{n+1}  |=|\int_{[a,b]}\frac{1}{1+v_{1}}dv_1\ldots\int_{[a,b]}\frac{1}{1+v_{n+1}}dv_{n+1}|=(\ln(\frac{b+1}{a+1}))^{n+1}.
         \end{array}
\end{displaymath}
Having in mind that $|\rho(s)|\leq 1$, the magnitude of the first term on the right-hand of \eqref{iui} is no more than $(\ln(\frac{b+1}{a+1}))^{n+1}$, which tends to 0 as $n\to\infty$, provided that $\ln(\frac{b+1}{a+1})<1$, or equivalently $\frac{b+1}{a+1}<e$.  Therefore,
\begin{equation}
    \begin{array}{rl}
         \rho(s)=&L(s)+\sum_{j=1}^{\infty} \int\ldots\int_{[a,b]^{j}}\prod_{i=1}^{j}k(\frac{s}{\prod_{m=1}^{i-1}(1+v_{m})},v_{i})L(\frac{s}{\prod_{m=1}^{j}(1+v_{m})})dv_1\ldots dv_{j}.
    \end{array}\label{iuis}
\end{equation}
Finally, $\rho(\mu)$ can be derived by substituting $s=\mu$ in \eqref{iuis}, and solving the derived equation. More precisely,
\begin{displaymath}
\begin{array}{rl}
    \rho(\mu)=&\sum_{j=0}^{\infty} \int\ldots\int_{[a,b]^{j}}\prod_{i=1}^{j}k(\frac{\mu}{\prod_{m=1}^{i-1}(1+v_{m})},v_{i})\vspace{2mm}\\&\times\left[\frac{1-\phi(\frac{\mu}{\prod_{m=1}^{j}(1+v_{m})})}{\frac{\mu}{\prod_{m=1}^{j}(1+v_{m})}}-\phi(\frac{\mu}{\prod_{m=1}^{j}(1+v_{m})})\frac{\rho(\mu)}{b-a}\ln\left(\frac{(1+b)\mu-\frac{\mu}{\prod_{m=1}^{j}(1+v_{m})}}{(1+a)\mu-\frac{\mu}{\prod_{m=1}^{j}(1+v_{m})}}\right)\right] dv_1\ldots dv_{j}.\end{array}
\end{displaymath}
Hence,
\begin{displaymath}
    \rho(\mu)=\frac{\sum_{j=0}^{\infty} \int\ldots\int_{[a,b]^{j}}\prod_{i=1}^{j}k(\frac{\mu}{\prod_{m=1}^{i-1}(1+v_{m})},v_{i})\frac{1-\phi(\frac{\mu}{\prod_{m=1}^{j}(1+v_{m})})}{\frac{\mu}{\prod_{m=1}^{j}(1+v_{m})}}dv_1\ldots dv_{j}}{1+\frac{1}{b-a}\sum_{j=0}^{\infty} \int\ldots\int_{[a,b]^{j}}\prod_{i=1}^{j}k(\frac{\mu}{\prod_{m=1}^{i-1}(1+v_{m})},v_{i})\phi(\frac{\mu}{\prod_{m=1}^{j}(1+v_{m})})\ln\left(\frac{(1+b)\prod_{m=1}^{j}(1+v_{m})-1}{(1+a){\prod_{m=1}^{j}(1+v_{m})}-1}\right) dv_1\ldots dv_{j}}.
\end{displaymath}
\begin{remark}
    Clearly, the approach we followed in this section can be used to incorporate dependencies among the gain interarrivals and the gain sizes, as well as the case where gain sizes follow Erlang, or mixed Erlang distribution.
\end{remark}
\section{Conclusion \& future work}
In this work, we cope with several non-trivial generalizations of the dual risk model with proportional gains, for which several independent assumptions among the gain interarrivals and the gain sizes were lifted, but still, we are able to derive explicit expressions for the ruin probability and the time to ruin. Among others we considered causal dependencies as well as dependencies that are based on the FGM copula. On top of that we also consider cases where the gain size is no longer exponentially distributed, but Erlang or mixed Erlang, which result in additional interesting observations. Moreover, it is well known that the mixed Erlang distribution belongs to a class of the phase-type distributions, which is dense in the space of distribution functions
defined on $[0,\infty)$. Moreover, we cope with the case where the proportional parameter is a uniformly distributed random variable. In the future we plan to investigate several open tasks:
\begin{enumerate}
    \item To identify the value of the discounted cumulative dividend payments in the presence of dependence among gain interarrivals and gain sizes.
    \item To cope with the problem where the jumps up from the level $u$ have a more general form.
    \item To consider more sophisticated copulas that describe the dependence structure, but still, to be able to derive explicit results for the values of interest. Finally, it would be interesting to consider a semi-Markovian dual risk model.
    \item Other opportunities for future study refer to multidimensional surplus processes, although are anticipated to be highly challenging.
\end{enumerate}
\section*{Competing interest} The author(s) declare none.
\bibliographystyle{abbrv}
\bibliography{dualrisk}
\end{document}